\newcommand{\Z}{\mathbb{Z}}  % integers
\newcommand{\R}{\mathbb{R}}  % real numbers
\newcommand{\eps}{\varepsilon}
\newcommand{\T}{\mathbb{T}}
\renewcommand{\hat}[1]{\widehat{#1}}
\theoremstyle{plain} 
\newtheorem{thm}{Theorem}[section]
\newtheorem{lem}{Lemma}[section]
\newtheorem{pro}{Proposition}[section]
\newtheorem{cor}{Corollary}[section]
\theoremstyle{definition}
\theoremstyle{remark}
\title{On Bohr sets of integer-valued traceless matrices}
\author{Alexander Fish}
\address{School of Mathematics and Statistics, University of Sydney, Australia}
\curraddr{}
\email{alexander.fish@sydney.edu.au}
\thanks{}
\keywords{Ergodic Ramsey Theory, Measure Rigidity, Analytic Number Theory}
\subjclass[2010]{Primary: 37A45; Secondary: 11P99, 11C99}
\begin{document}

\begin{abstract}
In this paper we show that any Bohr-zero non-periodic set $B$ of traceless integer-valued matrices, denoted by $\Lambda$, intersects non-trivially the conjugacy class of any matrix from $\Lambda$. As a corollary, we obtain that the family of  characteristic polynomials of $B$  contains all characteristic polynomials of matrices from $\Lambda$. The main ingredient used in this paper is an equidistribution  result for an $SL_d(\Z)$ random walk on a finite-dimensional torus deduced from  Bourgain-Furman-Lindenstrauss-Mozes work \cite{BFLM}.
\end{abstract}
\maketitle
%\section{}
%\subsection{}
\section{Introduction}

 Let us denote by $Mat_d^0(\mathbb{Z})$, $d \geq 2$,  the set of integer-valued $d\times d $ matrices with zero trace, and by $\T^n$, $n \geq 1$, the $n$-dimensional torus $\R^n / \Z^n$. Let $G$ be a countable abelian group.  A set $B \subset G$ is called a \textit{non-periodic Bohr set} if there exist a homomorphism $\tau:G \to \mathbb{T}^n$, for some $n \geq 1$, with $\overline{\tau(G)} = \mathbb{T}^n$, and an open set $U \subset \mathbb{T}^n$ satisfying $B = \tau^{-1}(U)$. If the open set $U$ contains the zero element of $\mathbb{T}^n$, then  the set $B$ is called a \textit{Bohr-zero set}.  We will also denote by $SL_d(\Z)$ the group of $d \times d$ integer-valued matrices of determinant one.

The main result of this paper is the following.
\medskip

\noindent \textbf{Main Theorem.} \textit{Let $d \geq 2$, and $B \subset Mat_d^0(\mathbb{Z})$ be a Bohr-zero non-periodic set. Then for any matrix $C \in Mat_d^0(\Z)$ there exists a matrix $A \in B$ and a matrix $g \in SL_d(\Z)$ such that $C = g^{-1}Ag$. }
\medskip

The same result has been also proved independently by Bj\"orklund and Bulinski \cite{BB}. They use the recent works of Benoist-Quint \cite{BQ2} and \cite{BQ3}, instead of the work of Bourgain-Furman-Lindenstrauss-Mozes as the main ingredient in the proof. Use of Bourgain-Furman-Lindenstrauss-Mozes work enables us to prove a strong equidistribution result for the random walk of $SL_d(\Z)$ acting on $Mat_d^0(\R) / Mat_d^0(\Z)$ by the conjugation (see Theorem \ref{equid2}). This result is interesting by its own, and may have other number-theoretic applications. This should be compared with Theorem 1.15 in \cite{BB}, where the equidistribution is established for Ces\`aro average of the random walk (a weaker statement).  

 \begin{cor}\label{cor1}
Let $d \geq 2$, and $B \subset Mat_d^0(\mathbb{\Z})$ be a Bohr-zero non-periodic set. The set of characteristic polynomials of the matrices in $B$ coincides with the set of all characteristic polynomials of the matrices in $Mat_d^0(\Z)$.
\end{cor}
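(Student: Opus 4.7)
The plan is to derive Corollary \ref{cor1} as a direct consequence of the Main Theorem, using only the elementary fact that conjugate matrices share the same characteristic polynomial.

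First I would handle the trivial inclusion. Since $B \subset Mat_d^0(\Z)$, any characteristic polynomial arising from a matrix in $B$ is automatically a characteristic polynomial of some matrix in $Mat_d^0(\Z)$. So the set of characteristic polynomials of matrices in $B$ is contained in the set of characteristic polynomials of matrices in $Mat_d^0(\Z)$.

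For the nontrivial inclusion, I would fix an arbitrary $C \in Mat_d^0(\Z)$ and apply the Main Theorem to produce a matrix $A \in B$ together with $g \in SL_d(\Z)$ such that $C = g^{-1} A g$. Then $A$ and $C$ are similar (in fact $\Z$-conjugate, but $\R$-similarity is enough here), so they have identical characteristic polynomials: $\det(xI - C) = \det(xI - g^{-1}Ag) = \det(g^{-1}(xI - A)g) = \det(xI - A)$. Since the characteristic polynomial of $C$ is attained by some matrix in $B$, and $C$ was arbitrary in $Mat_d^0(\Z)$, the reverse inclusion follows and the two sets of characteristic polynomials coincide.

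There is essentially no obstacle at this level: all the substantive work has been carried out in establishing the Main Theorem. The only (trivial) observation needed is that conjugation within $SL_d(\Z)$ preserves the characteristic polynomial, which allows the Main Theorem's existence of a conjugate inside $B$ to translate immediately into the existence of a matrix in $B$ with any prescribed characteristic polynomial from $Mat_d^0(\Z)$.
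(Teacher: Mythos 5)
Your proposal is correct and is exactly the derivation intended in the paper: Corollary \ref{cor1} follows immediately from the Main Theorem because conjugation by an element of $SL_d(\Z)$ preserves the characteristic polynomial, together with the trivial inclusion $B \subset Mat_d^0(\Z)$. Nothing further is needed.
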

\medskip

The following number-theoretic statement is an immediate implication of Corollary \ref{cor1}.
\begin{cor}\label{cor2}
Let $B \subset \Z$ be a Bohr-zero non-periodic set. Then the set of the discriminants over $B$ defined by 
\[
D := \{xy-z^2 \, | \, x,y,z \in B\}
\]
satisfies that $D = \Z$.
\end{cor}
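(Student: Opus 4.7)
The plan is to deduce Corollary~\ref{cor2} from Corollary~\ref{cor1} by realising $xy-z^2$ as (the negative of) an invariant of a $2\times 2$ traceless integer matrix and then producing the required matrix inside a Bohr-zero non-periodic set of $Mat_2^0(\Z)$ whose entries are constrained to lie in $B$.

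Write $B = \tau^{-1}(U)$ with $\tau : \Z \to \T^n$ a homomorphism satisfying $\overline{\tau(\Z)} = \T^n$ and $U \subset \T^n$ open with $0 \in U$. As a preliminary step I would replace $U$ by $V := U \cap (-U)$ and $B$ by $B' := \tau^{-1}(V) \subseteq B$; then $B'$ is still a Bohr-zero non-periodic set, now with the extra property $-B' = B'$, and proving the claim for $B'$ suffices since the corresponding discriminant set is contained in $D$. So I may assume $B = -B$.

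Identifying $Mat_2^0(\Z) \cong \Z^3$ via $\begin{pmatrix} a & b \\ c & -a \end{pmatrix} \leftrightarrow (a,b,c)$, I would define the homomorphism $\tilde\tau : Mat_2^0(\Z) \to \T^{3n}$ by $\tilde\tau(a,b,c) := (\tau(a),\tau(b),\tau(c))$. Its image is $\tau(\Z)^3$, whose closure equals $\overline{\tau(\Z)}^{\,3} = \T^{3n}$ since the closure of a product is the product of the closures, and the open set $U \times U \times U \subset \T^{3n}$ contains $0$. Hence $\tilde B := \tilde\tau^{-1}(U \times U \times U)$ is a Bohr-zero non-periodic subset of $Mat_2^0(\Z)$, consisting precisely of those traceless matrices whose three free entries all lie in $B$.

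Finally, I would apply Corollary~\ref{cor1} to $\tilde B$. The characteristic polynomial of $\begin{pmatrix} a & b \\ c & -a \end{pmatrix}$ equals $\lambda^2 - (a^2 + bc)$, and every polynomial $\lambda^2 - n$ with $n \in \Z$ arises from some element of $Mat_2^0(\Z)$ (take $a=0$, $b=1$, $c=n$). Given $m \in \Z$, applying Corollary~\ref{cor1} to $\tilde B$ with target polynomial $\lambda^2 + m$ yields $a,b,c \in B$ with $a^2 + bc = -m$. Setting $x := b$, $y := -c$, $z := a$, the symmetry $B = -B$ forces $x,y,z \in B$, and $xy - z^2 = -bc - a^2 = m$, proving $D = \Z$. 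The only point needing care is the verification that $\tilde B$ is genuinely Bohr-zero non-periodic, which is the product-of-closures observation above; the depth of the argument is entirely absorbed into Corollary~\ref{cor1}.
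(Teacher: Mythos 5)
Your proposal is correct and is exactly the deduction from Corollary~\ref{cor1} that the paper intends when it calls Corollary~\ref{cor2} an ``immediate implication'': one embeds $B$ entrywise into a Bohr-zero non-periodic subset of $Mat_2^0(\Z)$ (dense image in $\T^{3n}$ by the product-of-closures observation) and reads off $xy-z^2$ from the characteristic polynomial $\lambda^2-(a^2+bc)$. Your preliminary symmetrization $B'=\tau^{-1}(U\cap(-U))$ is a genuinely needed detail to fix the sign of the $bc$ term, and you handle it correctly.
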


At this point we will define Furstenberg's system corresponding to a set $B$ of positive density in a countable abelian group $G$. It is well known that in any (countable) abelian group we can find sequences of almost invariant finite sets, so called F\o lner sequences. A sequence of finite sets $(F_n)$ in $G$ is called \textit{F\o lner} if it is asymptotically $G$-invariant, i.e. for every $g \in G$ we have $\frac{\left| F_n \cap \left( F_n + g \right) \right|}{|F_n|} \to 1, \mbox{ as } n \to \infty$. 
We will say that $B$ has \textit{positive density} if  \textit{upper Banach density} of $B$ is positive, i.e., if
\[
d^*(B) = \sup_{(F_n) \subset G \mbox{ F\o lner}} \limsup_{n \to \infty} \frac{|B \cap F_n|}{|F_n|}
\] 
is positive. Furstenberg in his seminal paper \cite{Fu} constructed a $G$-measure-preserving system\footnote{A triple $(X,\eta,\sigma)$ is a $G$-\textit{measure-preserving system}, if $X$ is a compact metric space on which acts $G$ by a measurable action denoted by $\sigma$,  $\eta$ is a Borel probability measure on $X$, and the action of $G$ preserves $\eta$.}
$(X,\eta,\sigma)$ and a clopen set $\tilde{B} \subset X$ such that
\medskip

\begin{itemize}
\item $d^*(B \cap (B+h)) \geq \eta \left(\tilde{B} \cap \sigma(h)\tilde{B}\right), \mbox{ for any } h \in G$.\\
\item $\eta(\tilde{B}) = d^*(B)$.
\end{itemize} 
\medskip 

\noindent Moreover, we can further refine the statement of the correspondence principle\footnote{The proof of Correspondence Principle II of Appendix I in \cite{BF} will work also for the finite intersections, and therefore, will imply the claim.}, and require from the system to be ergodic\footnote{A $G$-measure-preserving system is \textit{ergodic} if any $G$-invariant measurable set has measure either zero or one.}. 
We will denote Furstenberg's (ergodic) system corresponding to $B$ by $X_{B} = (X,\eta,\sigma, \tilde{B})$.
Next, we will define  the notion of the spectral measure corresponding to a set $B$ of a countable abelian group $G$ of positive density and its Furstenberg's system $X_B = (X,\eta,\sigma,\tilde{B})$. 
%We will denote by $1_B$ the indicator function of the set $B$, and by $\ell^2(\Lambda)$ the Hilbert space of square-integrable complex valued functions on $\Lambda$ equipped with the scalar product:
%\[
%\langle f_1, f_2 \rangle = \sum_{h \in \Lambda} f_1(h) \overline{f_2(h)}, \mbox{ for } f_1,f_2 \in \ell^2(\Lambda).
%\] 
%Denote the regular representation of $\Lambda$ on $\ell^2(\Lambda)$ by $\sigma$. Bochner's spectral theorem \cite{Fo} states that for every $f \in \ell^2(\Lambda)$ there exists a non-negative finite Borel measure $\nu_f$ on $\hat{\Lambda}$ (the dual of $\Lambda$) which satisfies:
%\[
%\langle f, \sigma(h) f \rangle = \int_{\hat{\Lambda}} \chi(h) d \nu_f(\chi), \mbox{ for } h \in \Lambda.
%\]
%Let $(X,\mu,\sigma,\tilde{B})$ be Furstenberg's system corresponding to a set $B \subset \Lambda$. 
Denote by $1_{\tilde{B}}$ the indicator function of the set $\tilde{B}$. Then by Bochner's spectral theorem \cite{Fo} there exists  a non-negative finite Borel measure $\nu$ on $\hat{G}$ (the dual of $G$) which satisfies:
\[
\langle 1_{\tilde{B}}, \sigma(h) 1_{\tilde{B}}\rangle = \int_{\hat{G}} \chi(h) d \nu(\chi), \mbox{ for } h \in G.
\]
The measure $\nu$ will be called the \textit{spectral measure of the set $B$ and its Furstenberg's system $X_B$}, and we will denote by $\hat{\nu}(h)$ the right hand side of the last equation. 
%Next we will define the notion of a density of a subset of $\Lambda$. Let us denote by $\mathcal{F}$ the set of all $\Lambda$-invariant means on $\ell^{\infty}(\Lambda)$, i.e., non-negative normalised $\Lambda$-invariant linear functionals on $\ell^{\infty}(\Lambda)$. Since $\Lambda$ is abelian, this implies that $\mathcal{F} \neq \emptyset$. 
%We say that $B$ has \textit{positive density} if  \textit{upper Banach density} of $B$ is positive:
%\[
%d^*(B) = \sup_{\lambda \in \mathcal{F}} \lambda(1_B) > 0.
%\] 
We are at the position to state the main technical claim of the paper (see Section \ref{sec3} for the proof).
\medskip

\begin{thm}\label{techn_thm}
Let $d \geq 2$, and let $B \subset Mat_d^0(\Z)$ be a set of positive density such that  the spectral measure\footnote{We assume the existence of some Furstenberg's system $X_B$ corresponding to the set $B$, such that the associated spectral measure satisfies the requirement of the theorem.} of $B$ has no atoms at non-trivial characters having finite torsion. Then for every $C \in Mat_d^0(\Z)$ there exist $A \in B-B$ and $g \in SL_d(\Z)$ with $C = g^{-1}Ag$. 
\end{thm}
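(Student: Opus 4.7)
\noindent\textbf{Proof plan for Theorem \ref{techn_thm}.}
The plan is to combine Bochner's spectral theorem with the equidistribution statement Theorem \ref{equid2} in a positivity-in-the-average argument. Writing $G = Mat_d^0(\Z)$, identify the dual $\hat{G}$ with the torus $\mathcal{T} := Mat_d^0(\R)/Mat_d^0(\Z)$ via $\chi_X(A) = e^{2\pi i \,\mathrm{tr}(XA)}$. The conjugation action of $SL_d(\Z)$ on $G$ dualizes to the conjugation action on $\mathcal{T}$, because
\[
\chi_X(g^{-1}Ag) = e^{2\pi i \,\mathrm{tr}(gXg^{-1}A)} = \chi_{gXg^{-1}}(A), \qquad g\in SL_d(\Z).
\]
Let $\nu$ be the spectral measure of $B$ on $\hat{G} = \mathcal{T}$. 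The identity $\hat{\nu}(h) = \eta(\tilde{B}\cap \sigma(h)\tilde{B})\ge 0$ shows that $\hat{\nu}(h) > 0$ implies $d^*(B\cap (B+h)) > 0$, in particular $h \in B-B$. Since $X_B$ may be taken ergodic, $\nu(\{0\}) = \eta(\tilde{B})^2 = d^*(B)^2 > 0$.

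Fix $C \in Mat_d^0(\Z)$; the case $C=0$ being trivial, assume $C\neq 0$. Pick a symmetric, finitely supported probability measure $\mu$ on $SL_d(\Z)$ whose support generates the group and, for $N\ge 1$, consider
\[
S_N \;:=\; \sum_{g\in SL_d(\Z)} \mu^{*N}(g)\,\hat{\nu}(g^{-1}Cg).
\]
If the conclusion of the theorem fails, then $\hat{\nu}(g^{-1}Cg) = 0$ for all $g\in SL_d(\Z)$ and hence $S_N \equiv 0$. On the other hand, Fubini and the intertwining formula give
\[
S_N \;=\; \int_{\mathcal{T}} \Phi_N(X)\,d\nu(X),\qquad \Phi_N(X) \;:=\; \sum_{g} \mu^{*N}(g)\,\chi_{gXg^{-1}}(C).
\]
At $X=0$ one has $\Phi_N(0) = 1$. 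At a non-trivial torsion point $X\in \mathcal{T}$ the hypothesis ensures $\nu(\{X\}) = 0$. At every remaining (non-torsion) $X$, Theorem \ref{equid2} asserts that the push-forward of $\mu^{*N}$ under $g\mapsto gXg^{-1}$ equidistributes to Haar measure $m$ on $\mathcal{T}$, so
\[
\Phi_N(X) \;\longrightarrow\; \int_{\mathcal{T}} \chi_Y(C)\,dm(Y) \;=\; 0
\]
since $C\neq 0$. Bounded convergence (trivially $|\Phi_N|\le 1$) then forces $S_N\to \nu(\{0\}) = d^*(B)^2 > 0$, contradicting $S_N\equiv 0$. Consequently some $g\in SL_d(\Z)$ satisfies $\hat{\nu}(g^{-1}Cg) > 0$; setting $A := g^{-1}Cg$ yields $A\in B-B$ and $C = gAg^{-1}$, as required.

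\noindent\emph{Main obstacle.} The delicate step is invoking Theorem \ref{equid2} in precisely the pointwise form used above: for every non-torsion $X\in \mathcal{T}$, the distributions $(gXg^{-1})_*\mu^{*N}$ must converge weak-$*$ to $m$. The BFLM-type input supplies this (indeed with a quantitative rate depending on the Diophantine approximation of $X$ by torsion points), and crucially it identifies the only obstructions as torsion characters; the no-atoms hypothesis is tailored exactly to discard these. A subsidiary point is the qualitative equivalence "finite $SL_d(\Z)$-orbit on $\mathcal{T}$ $\Leftrightarrow$ torsion character", which is transparent here because integer conjugation preserves rational coordinates and a finite orbit lies in a discrete $SL_d(\Z)$-invariant subset, hence rational.
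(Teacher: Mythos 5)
Your argument is correct and is essentially the paper's proof: the same contradiction via the Fourier coefficient $\sum_g \mu^{*N}(g)\,\hat{\nu}(g^{-1}Cg)$, the same use of ergodicity plus the F\o lner character average to get $\nu(\{0\})=\eta(\tilde{B})^2>0$, and the same BFLM equidistribution input, with your dominated-convergence step simply inlining what the paper packages as Theorem \ref{equid2}. The only nitpick is a citation: the pointwise statement you invoke (equidistribution of $\mu^{*N}\ast\delta_X$ for non-rational $X$) is Proposition \ref{equidistribution}, while Theorem \ref{equid2} is its integrated version for measures without rational atoms.
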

\medskip

Theorem \ref{techn_thm} is an extension  of the following result that has been proved in \cite{BF} by use of the equidistribution result of Benoist-Quint \cite{BQ}.
\medskip

\begin{thm}
\label{BF}
Let $d \geq 2$, and let  $B \subset Mat_d^0(\Z)$ be a set of positive density. Then there exists $k \geq 1$ such that for any matrix $C \in k Mat_d^0(\Z)$ there exists $A \in B - B$ and $g \in SL_d(\Z)$ with 
$C = g^{-1} A g$. 
\end{thm}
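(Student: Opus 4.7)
The strategy combines the Furstenberg correspondence with the Benoist--Quint equidistribution theorem for the $SL_d(\Z)$-action by conjugation on the dual torus of $Mat_d^0(\Z)$; the integer $k$ is then chosen so as to neutralise the (possibly non-trivial) torsion atoms of the spectral measure of $B$.

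First I would invoke the correspondence principle to obtain an ergodic $Mat_d^0(\Z)$-measure-preserving system $(X,\eta,\sigma,\tilde{B})$ with $\eta(\tilde{B})=d^*(B)>0$, and form its spectral measure $\nu$ on $\widehat{Mat_d^0(\Z)} \cong Mat_d^0(\R)/Mat_d^0(\Z) \cong \T^{d^2-1}$, realising characters as $\chi_M(A) = e^{2\pi i\,\mathrm{tr}(MA)}$. It suffices to find some $g \in SL_d(\Z)$ with $\hat\nu(gCg^{-1})>0$: by Bochner this gives $\eta(\tilde{B}\cap\sigma(gCg^{-1})\tilde{B})>0$, hence $d^*(B\cap(B+gCg^{-1}))>0$ by Furstenberg, so $A:=gCg^{-1}\in B-B$ supplies the decomposition $C=g^{-1}Ag$.

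To produce such a $g$ I would fix a finitely supported probability measure $\mu$ on $SL_d(\Z)$ whose support generates a Zariski-dense subgroup, and Cesàro average $\int \hat\nu(gCg^{-1})\,d\mu^{*j}(g)$ in $j$. Using the cyclic identity $\chi_M(gCg^{-1}) = e^{2\pi i\,\mathrm{tr}((g^{-1}Mg)C)}$ and Fubini, this rewrites as
\[
\int_{\T^{d^2-1}}\!\Big(\tfrac{1}{n}\sum_{j=1}^{n}\int_{SL_d(\Z)} e^{2\pi i\,\mathrm{tr}((g^{-1}Mg)C)}\,d\mu^{*j}(g)\Big)\,d\nu(M).
\]
For fixed $M$ the inner bracket is a Cesàro average of a continuous function along the conjugation random walk on $\T^{d^2-1}$ starting at $M$. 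Since the adjoint representation of $SL_d$ on $\mathfrak{sl}_d$ is absolutely irreducible, the only $SL_d(\Z)$-invariant rational subspaces of $Mat_d^0(\R)$ are $\{0\}$ and the whole space, so the only $SL_d(\Z)$-invariant sub-tori of $\T^{d^2-1}$ are $\{0\}$ and $\T^{d^2-1}$ itself. The Benoist--Quint theorem then identifies the pointwise limit as $\frac{1}{|\mathcal{O}(M)|}\sum_{M'\in\mathcal{O}(M)} e^{2\pi i\,\mathrm{tr}(M'C)}$ when $M$ is rational (with finite conjugation orbit $\mathcal{O}(M)$), and as $0$ for every irrational $M$ (Haar integration on $\T^{d^2-1}$, using $C\neq 0$; the case $C=0$ is trivial). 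Dominated convergence collapses the spectral integral to $\sum_{M\in\mathrm{Rat}(\nu)}\nu(\{M\})\cdot\tfrac{1}{|\mathcal{O}(M)|}\sum_{M'\in\mathcal{O}(M)} e^{2\pi i\,\mathrm{tr}(M'C)}$, a sum over the (at most countably many) torsion atoms of $\nu$.

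The choice of $k$ is dictated by this residual sum. Conjugation by $SL_d(\Z)$ preserves the additive order of a rational point of $\T^{d^2-1}$, so whenever $M$ has order dividing $k$ and $C\in k\,Mat_d^0(\Z)$, every $M'\in\mathcal{O}(M)$ satisfies $\mathrm{tr}(M'C)\in\Z$, and $M$ contributes exactly $\nu(\{M\})$; in particular the trivial atom contributes $\nu(\{0\})=\eta(\tilde{B})^2 = d^*(B)^2>0$ by ergodicity. Since $\nu$ is finite with at most countably many atoms, I can choose $k=k(B)$ so large that the total $\nu$-mass of torsion atoms whose order does \emph{not} divide $k$ is below $\epsilon:=d^*(B)^2/2$; the contribution of those atoms to the sum is then bounded in modulus by $\epsilon$. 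Hence the Cesàro average is eventually at least $d^*(B)^2/2>0$, forcing some $g$ in the support of some $\mu^{*j}$ to satisfy $\hat\nu(gCg^{-1})>0$. The main obstacle is handling this potentially infinite collection of torsion atoms of $\nu$: the enlargement of $k$ is precisely what guarantees that their contributions cannot cancel the strictly positive $d^*(B)^2$ coming from the trivial atom, and this is the source of the dependence $k=k(B)$ in the statement of the theorem.
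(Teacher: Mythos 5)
Your proposal is correct, but note that the paper itself does not prove Theorem \ref{BF}: it quotes it from \cite{BF}, where it was established exactly along the lines you describe (Furstenberg correspondence plus the Ces\`aro equidistribution of Benoist--Quint for the conjugation walk on the dual torus $A_d$, with $k$ chosen to neutralise torsion atoms of the spectral measure), so you have essentially reconstructed the original argument. Compared with the argument the paper does give (the proof of Theorem \ref{techn_thm}), the differences are instructive: there the extra hypothesis that $\nu$ has no atoms at non-trivial torsion characters lets one invoke Theorem \ref{equid2} (deduced from Bourgain--Furman--Lindenstrauss--Mozes), so that $\mu^{*k}\ast\nu$ itself converges to $\eta(\tilde{B})\left(1-\eta(\tilde{B})\right)m_{A_d}+\eta(\tilde{B})^2\delta_{o_{A_d}}$ with no Ces\`aro averaging and no auxiliary integer $k$, yielding the conclusion for every $C\in Mat_d^0(\Z)$; your $k$-trick (taking $k$ a common multiple of the orders of the rational atoms carrying all but $d^*(B)^2/2$ of the atomic mass, so that these contribute non-negatively and cannot cancel the trivial atom $\nu(\{o_{A_d}\})=d^*(B)^2$) is precisely what buys the weaker ``$k\,Mat_d^0(\Z)$'' conclusion without any spectral hypothesis. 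If you write this up, make two steps explicit: first, that irrational points of $A_d$ have infinite $SL_d(\Z)$-orbits (the fixed-point set of a finite-index subgroup is finite because the adjoint action is strongly irreducible, as in Proposition \ref{action}), which is what justifies ``Haar limit at every irrational $M$''; and second, that on a finite rational orbit the walk acts by permutations, so the Ces\`aro limit is a probability measure supported on the orbit --- which is all you need, since for atoms of order dividing $k$ every character value at $C$ equals $1$.
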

\medskip

We would like to finish the introduction by stating the piecewise version of Main Theorem. We recall that a set $B \subset G$ is called \textit{piecewise (non-periodic) Bohr set} if there is a (non-periodic) Bohr set $B_0 \subset G$ and a (thick) set $T \subset G$ of upper Banach density one, i.e., $d^*(T) = 1$ such that $B = B_0 \cap T$. 
Theorem \ref{techn_thm} implies the following result (see the proof in Section \ref{sec3}).
\medskip

\begin{thm}\label{PWBohrthm}
Let $d \geq 2$, and let $B \subset Mat_d^0(\Z)$ be a piecewise Bohr non-periodic set. Then for every $C \in Mat_d^0(\Z)$ there exist $A \in B-B$ and $g \in SL_d(\Z)$ with $C = g^{-1}Ag$. 
\end{thm}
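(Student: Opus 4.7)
The plan is to reduce Theorem \ref{PWBohrthm} to Theorem \ref{techn_thm}: given a piecewise non-periodic Bohr set $B = B_0 \cap T \subset Mat_d^0(\Z)$, with $B_0 = \tau^{-1}(U)$ a non-periodic Bohr set and $T$ thick ($d^*(T) = 1$), I will exhibit a Furstenberg system of $B$ of positive density whose spectral measure has no atom at any non-trivial character of finite torsion. Here $\tau : Mat_d^0(\Z) \to \T^n$ has dense image and $U \subset \T^n$ is a non-empty open set.

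\textbf{Step 1 (correlations of $B$ match those of $B_0$).} Since $G := Mat_d^0(\Z) \cong \Z^{d^2-1}$, Weyl equidistribution furnishes a F\o lner sequence in $G$ along which $\tau$ equidistributes in $\T^n$, so that
\[
\frac{|B_0 \cap (B_0 - h) \cap F_N|}{|F_N|} \xrightarrow[N \to \infty]{} \mathrm{Haar}\bigl(U \cap (U - \tau(h))\bigr)
\]
for every $h \in G$. A diagonal extraction over the countable family $\{h \in G\}$ further refines $(F_N)$ so that in addition $|(T \cap (T-h)) \cap F_N|/|F_N| \to 1$ for every $h$; this is possible because $T$ is thick. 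Using $B \cap (B-h) = B_0 \cap (B_0 - h) \cap T \cap (T - h)$ together with the density-one property of $T \cap (T-h)$ along this F\o lner, I conclude $\lim_N |B \cap (B-h) \cap F_N|/|F_N| = \mathrm{Haar}(U \cap (U - \tau(h)))$ for every $h$. In particular $d^*(B) \geq \mathrm{Haar}(U) > 0$, and Furstenberg's construction along $(F_N)$ produces a system $X_B = (X,\eta,\sigma,\tilde{B})$ with $\langle 1_{\tilde{B}}, \sigma(h) 1_{\tilde{B}}\rangle = \mathrm{Haar}(U \cap (U - \tau(h)))$.

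\textbf{Step 2 (spectrum of $B$ has no non-trivial finite-torsion atoms).} Expanding $1_U$ in its Fourier series on $\T^n$ and applying Parseval gives
\[
\mathrm{Haar}\bigl(U \cap (U - \tau(h))\bigr) = \sum_{k \in \Z^n} |\hat{1}_U(k)|^2 \, e^{2\pi i \langle k,\, \tau(h)\rangle},
\]
so uniqueness in Bochner's theorem identifies the spectral measure of $B$ as $\nu_B = \sum_k |\hat{1}_U(k)|^2 \delta_{\chi_k \circ \tau}$, where $\chi_k \circ \tau \in \hat G$ denotes the character $g \mapsto e^{2\pi i \langle k, \tau(g)\rangle}$. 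If some non-trivial $\chi_k \circ \tau$ were of finite torsion, then $m \langle k, \tau(g)\rangle \in \Z$ would hold for some $m \geq 1$ and every $g \in G$; since $\tau(G)$ is dense in $\T^n$, this forces $k = 0$, a contradiction. Hence $\nu_B$ carries no atom at a non-trivial finite-torsion character, and Theorem \ref{techn_thm} applies to $B$ to give the desired $A \in B - B$ and $g \in SL_d(\Z)$.

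\textbf{Main obstacle.} The technical heart is coordinating Step 1 into a single F\o lner sequence realizing both the Haar-equidistribution of $\tau$ and the density-one property of $T \cap (T-h)$ for all $h \in G$; once this diagonalization is in hand, the rest is a transfer of positive-definite functions from $B_0$ to $B$ together with the elementary observation that pullback characters through a dense-image homomorphism into a torus are never non-trivially torsion.
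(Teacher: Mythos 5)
Your Step 1 contains a genuine gap: from equidistribution of $\tau$ along a F\o lner sequence you cannot conclude that $\frac{1}{|F_N|}|\{g\in F_N:\tau(g)\in V\}|\to \mathrm{Haar}(V)$ for an \emph{arbitrary} open set $V=U\cap(U-\tau(h))$. Weak$^*$ convergence of the empirical measures to Haar controls only continuity sets, i.e.\ sets whose boundary is Haar-null; for a general open $U$ the limit can strictly exceed $\mathrm{Haar}(U)$ (e.g.\ on $\T^1$ take $U$ the complement of a fat Cantor set chosen to avoid the countable orbit $\{\tau(g)\}$: then every orbit point lies in $U$, so the visit frequency is $1$ while $\mathrm{Haar}(U)<1$). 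Consequently the exact formula $\langle 1_{\tilde B},\sigma(h)1_{\tilde B}\rangle=\mathrm{Haar}(U\cap(U-\tau(h)))$ on which your Step 2 Fourier identification of the spectral measure rests is unjustified, and in general false, as stated. The paper avoids precisely this by first shrinking $U$ to an open ball $U_o\subset U$ with $m_{\T^n}(\partial U_o)=0$ (Jordan measurable) and proving the theorem for the smaller piecewise Bohr set $B'=\tau^{-1}(U_o)\cap T\subset B$, which suffices since $B'-B'\subset B-B$; with that reduction your convergence claim becomes correct (unique ergodicity of the rotation applies to Riemann-integrable indicators), and your diagonalization worry largely evaporates, since character averages vanish along every F\o lner sequence (Lemma \ref{character_lem}) and one only needs to place the F\o lner sets inside the thick set $T$. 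A secondary point to tidy up: Theorem \ref{techn_thm} is invoked for ``a Furstenberg system'' with $\eta(\tilde B)=d^*(B)$ and ergodicity (its proof uses von Neumann's theorem for the ergodic system); your abstractly constructed system along $(F_N)$ need not satisfy these for non-Jordan $U$, whereas after the reduction one can simply take the Kronecker system $(\T^n,\mathrm{Haar},x\mapsto x+\tau(h))$ with $\tilde B=U_o$, as the paper does, and verify the correspondence identities.

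Modulo that repair, your Step 2 is a legitimate and rather clean alternative to the paper's argument for the absence of torsion atoms: the paper proves total ergodicity of the Kronecker system (finite-index subgroups of $\Lambda$ still have dense $\tau$-image because $\T^n$ is connected, Lemmas \ref{lem1}--\ref{lem2}) and runs an averaging argument over $q\Lambda$, whereas you identify the spectral measure directly as $\sum_k|\hat 1_{U_o}(k)|^2\delta_{\chi_k\circ\tau}$ and observe that a pullback $\chi_k\circ\tau$ of a nonzero character under a dense-image homomorphism can never have finite order. Both routes ultimately exploit the same fact (density of $\tau(\Lambda)$ kills rational spectrum), but your computation makes the spectral measure explicit, which is a nice feature worth keeping once Step 1 is restricted to a Jordan-measurable $U_o$.
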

\medskip

Let us show that Theorem \ref{PWBohrthm} implies Main Theorem.
\medskip

\noindent \textbf{Proof of Main Theorem.} Let  $B \subset Mat_d^0(\Z)$ be a Bohr-zero non-periodic set. Notice that there exists $B_0 \subset Mat_d^0(\Z)$ a Bohr-zero non-periodic set with the property that 
\[
B_0 - B_0 \subset B.
\]
Now, we apply Theorem \ref{PWBohrthm} for the set $B_0$, and as a conclusion obtain the statement of the theorem.
\qed
\medskip

\noindent \textit{Organisation of the paper.} In Section \ref{sec2} we establish the consequences of the equidistribution result of Bourgain-Furman-Lindenstrauss-Mozes \cite{BFLM} related to the adjoint action of $SL_d(\Z)$ on $Mat_d^0(\R) / Mat_d^0(\Z)$. In Section \ref{sec3} we prove Theorems \ref{techn_thm}, and \ref{PWBohrthm}. 
%In Section \ref{sec4} we prove main technical propositions stated in Sections  \ref{sec2} and \ref{sec3}. %In Section \ref{sec5} we provide concrete examples of number-theoretic consequences of our theorems. 
\medskip

\noindent \textit{Acknowledgement.} The author is grateful to Ben Green and Tom Sanders for fruitful discussions on the topic of the paper. He is also grateful to Benoist Quint for explaining certain ingredients of \cite{BQ3}, and to Shahar Mozes for insightful discussions related to the paper \cite{BFLM}. And, finally, the author would like to thank an anonymous referee for valuable suggestions improving the presentation in the paper.
\medskip

\section{Consequences of the work of Bourgain-Furman--Lindenstrauss-Mozes}
\label{sec2}

We start by recalling the property of strong irreducibility of an action of a discrete group. Let $\Gamma$ be a countable group, and $V$ be a finite dimensional real space. We say that an action $\rho:\Gamma \to End(V)$ is \textit{strongly irreducible} if for every finite index subgroup $H$ of  $\Gamma$, the restriction of the action of $\rho$ to $H$ is irreducible. We also will be using the notion of a proximal element. An operator $T \in End(V) $ will be called \textit{proximal}, if there is only one eigenvalue of the largest absolute value, and corresponding to it eigenspace is one-dimensional.

Assume that a countable group $\Gamma$ acts on a compact Borel measure space $(X,\nu)$. Let $\mu$ be a probability measure on $\Gamma$. Then the convolution measure $\mu \ast \nu$ on $X$ is defined by:
\[
\int_X f d\left( \mu \ast \nu\right) = \int_X \left( \sum_{g \in \Gamma} f(gx)\mu(g) \right) d \nu(x), \mbox{ for any } f \in C(X).
\]
We will denote the Dirac probability measure at a point $x \in X$ by $\delta_x$. For every $k \geq 2$, we define the probability measure $\mu^{*k}$ on $\Gamma$ by 
\[
\mu^{*k}(g) = \sum_{g_1 \cdot \ldots  \cdot g_k =g} \mu(g_1)\mu(g_2)\ldots \mu(g_k).
\]

The main ingredient in the proofs of all our main results is the following seminal equidistribution statement due to Bourgain-Furman-Lindenstrauss-Mozes \cite{BFLM}.

\begin{thm}[Corollary B in \cite{BFLM}]\label{BFLM}
Let $\Gamma < SL_n(\Z)$ be a subgroup which acts totally irreducibly on $\R^n$, and having a proximal element. Let $\mu$ be a finite generating probability measure on $\Gamma$. Let $x \in \T^n$ be a non-rational point. Then the measures $\mu^{*k} \ast \delta_x$ converge in weak$^*$-topology as $k \to \infty$ to Haar measure on $\T^n$.
\end{thm}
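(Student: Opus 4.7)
The plan is to deduce this qualitative statement from the deep quantitative equidistribution result of \cite{BFLM} (their Theorem~A). By the Stone--Weierstrass theorem together with Fourier inversion on $\T^n$, weak$^*$ convergence of $\mu^{*k}\ast\delta_x$ to Haar measure is equivalent to the pointwise vanishing
\[
\widehat{\mu^{*k}\ast\delta_x}(\xi) \;=\; \sum_{g\in\Gamma}\mu^{*k}(g)\, e^{2\pi i\langle g^{t}\xi,\, x\rangle} \;\longrightarrow\; 0, \qquad \xi \in \Z^n\setminus\{0\},
\]
as $k\to\infty$. I would therefore fix such a $\xi$ and aim for this scalar convergence.

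The quantitative Theorem~A furnishes a dichotomy: for every $\lambda > 0$ there exist constants $c, C > 0$ such that whenever $k \geq C\log(\|\xi\|+1)$, either the Fourier coefficient above is bounded by $e^{-ck}$, or $x$ lies within $e^{-\lambda k}$ of some rational point $p/q \in \T^n$ with $q \leq e^{\lambda k}$. Suppose for contradiction that $|\widehat{\mu^{*k_j}\ast\delta_x}(\xi)| > \eps$ along some subsequence $k_j\to\infty$. The first alternative is then ruled out, so each $k_j$ produces a rational approximation $p_j/q_j$ of $x$. If the $q_j$ remain bounded, pass to a constant subsequence $(p_j,q_j) = (p,q)$; the inequality $\|qx - p\|_{\T^n} < e^{-\lambda k_j}\to 0$ forces $qx = p$ in $\T^n$, contradicting the non-rationality of $x$. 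The case $q_j \to \infty$ is subtler and requires a refined form of Theorem~A: the denominators that arise are not arbitrary integers but come from integer vectors in the dual $\Gamma$-orbit of $\xi$, and iterating the dichotomy (or reading off the more precise statement in \cite{BFLM}) rules out infinitely many such approximations unless $x$ itself is rational. This passage from quantitative to qualitative is exactly the content of Corollary~B in \cite{BFLM}.

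The main obstacle is of course Theorem~A itself, whose proof is the core of \cite{BFLM}. Its strategy combines the strong irreducibility and proximality of $\Gamma$ acting on $\R^n$, via Furstenberg--Guivarc'h theory for products of random matrices, to ensure that the dual random walk on $\Z^n$ expands at exponential rate with no trapped subspaces; this expansion is then converted, through a Bourgain-style sum--product argument at each dyadic scale, into cancellation in the exponential sum displayed above. The rational-approximation alternative arises precisely as the obstruction to full cancellation. Reproducing this machinery would take us well outside the scope of the present paper, so I would invoke Theorem~A (and hence its qualitative consequence Corollary~B) as a black box.
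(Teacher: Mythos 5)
This statement is imported by the paper verbatim as Corollary B of \cite{BFLM} with no proof supplied, so your decision to treat the quantitative Theorem A of \cite{BFLM} as a black box and sketch the standard Fourier-coefficient deduction is essentially the same treatment as the paper's. One remark: in the actual Theorem A the denominator bound on the rational approximation depends only on $\|\xi\|$ and the threshold $\varepsilon$, not on $k$, so the ``subtler'' unbounded-denominator case you set aside does not in fact arise once the precise statement is used.
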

   
In this note, the acting group will be $\Gamma = SL_d(\Z)$. The group $\Gamma$ acts by the conjugation on the real vector space $V = Mat_d^0(\R)$ of real valued $d \times d$ matrices with zero trace. So, an element  $g \in SL_d(\Z)$ acts on $v \in V$ by $Ad(g)v = g^{-1}vg$, and such action called the \textit{adjoint action} of $SL_d(\Z)$.  Notice that $V$ is  isomorphic to $\R^{d^2-1}$ and for any element $g$ of $SL_d(\Z)$, the endomorphism  of $V$ obtained by the conjugate action of $g$ has determinant one. The next claim will allow us to apply Theorem \ref{BFLM} in our setting.

\begin{pro}\label{action}
The adjoint action of  $SL_d(\mathbb{Z})$ on $Mat_d^0(\mathbb{R})$ is strongly irreducible, and  $SL_d(\Z)$ contains an element which acts proximally.
\end{pro}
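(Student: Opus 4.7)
The plan has two parts. For strong irreducibility, I would combine three ingredients: (i) the adjoint representation of $SL_d(\R)$ on $\mathfrak{sl}_d(\R) = Mat_d^0(\R)$ is irreducible, since $SL_d(\R)$ is connected and the real Lie algebra $\mathfrak{sl}_d(\R)$ is simple for $d \geq 2$, so $Ad$-invariant subspaces coincide with Lie ideals; (ii) $SL_d(\Z)$ is Zariski dense in $SL_d(\R)$, which can be checked elementarily, for example by noting that $\{I+n e_{ij} : n \in \Z\}$ is Zariski dense in the one-parameter unipotent subgroup $\{I+t e_{ij} : t \in \R\}$ for each $i \neq j$, and these unipotent subgroups together generate $SL_d(\R)$; (iii) every finite-index subgroup $H \leq SL_d(\Z)$ is Zariski dense in $SL_d(\R)$ --- if $SL_d(\Z) = \bigsqcup_{i=1}^{k} g_i H$, then passing to Zariski closures shows that the Zariski closure of $H$ is a closed subgroup of finite index in $SL_d(\R)$, and since $SL_d$ is a connected algebraic group this forces the closure to equal $SL_d(\R)$. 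Given these, for any $H$-invariant subspace $W \subseteq \mathfrak{sl}_d(\R)$ the stabiliser $\{g \in SL_d(\R) : Ad(g)W = W\}$ is Zariski closed and contains $H$, hence also the Zariski closure of $H$, which is all of $SL_d(\R)$; irreducibility of the $SL_d(\R)$-action then forces $W=0$ or $W = \mathfrak{sl}_d(\R)$.

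For the existence of a proximal element, I would exhibit an explicit $A \in SL_d(\Z)$ with $d$ distinct positive real eigenvalues $\lambda_1 > \cdots > \lambda_d > 0$. Such $A$ are furnished by totally real number theory: take a totally real number field $K/\Q$ of degree $d$, a totally positive unit $u \in \mathcal{O}_K^{\times}$ of norm $1$, and let $A$ be the matrix of multiplication by $u$ on $\mathcal{O}_K$ in an integral $\Z$-basis; then $\det A = N_{K/\Q}(u) = 1$ and the eigenvalues of $A$ over $\R$ are the $d$ Galois conjugates of $u$. Diagonalising $A$ over $\R$, the adjoint action $Ad(A)$ on $\mathfrak{sl}_d(\R)$ has eigenvalues $\lambda_j/\lambda_i$ on the one-dimensional subspaces spanned by the off-diagonal matrix units $E_{ij}$ (with $i \neq j$ in this eigenbasis), together with the eigenvalue $1$ on the $(d{-}1)$-dimensional space of traceless diagonals. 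The strict ordering of the $\lambda_i$ implies $\lambda_1/\lambda_d > 1$ and $\lambda_1/\lambda_d > \lambda_j/\lambda_i$ for every other pair with $i \neq j$, so $Ad(A)$ has a unique eigenvalue of largest modulus and a one-dimensional corresponding eigenspace --- that is, $Ad(A)$ is proximal.

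The proof is, on the whole, not technically hard; both claims reduce to standard facts about $SL_d$. The one subtlety worth highlighting is that it is \emph{Zariski} (rather than Euclidean) density of $H$ in $SL_d(\R)$ that allows the transfer of invariance of a subspace from the discrete group $H$ to the ambient Lie group. For proximality one must produce an \emph{explicit} integer realisation of a hyperbolic spectrum, because the set of $g \in SL_d(\R)$ with $Ad(g)$ proximal is not a Zariski-open subset of $SL_d$ (proximality is only an open condition in the Euclidean topology), so one cannot simply intersect it with the Zariski-dense subgroup $SL_d(\Z)$ by a general-position argument.
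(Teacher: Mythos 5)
Your argument is correct in outline but takes a genuinely different route from the paper on both halves. For strong irreducibility the paper simply invokes Corollary 5.4 of \cite{BF}, whereas you reprove it from scratch by the standard chain: simplicity of $\mathfrak{sl}_d(\R)$ gives irreducibility of the adjoint representation of the connected group $SL_d(\R)$; $SL_d(\Z)$ is Zariski dense; a finite-index subgroup $H$ has Zariski closure of finite index, hence (by connectedness of $SL_d$) all of $SL_d(\R)$; and the stabiliser of a subspace is Zariski closed. That is a complete and correct argument, just one the paper outsources. For proximality the paper instead exhibits a concrete element: the hyperbolic $2\times 2$ block $B_2$ (trace $3$, eigenvalues $\frac{3\pm\sqrt{5}}{2}$) padded by the identity, and computes the spectrum of its adjoint action explicitly, finding the simple top eigenvalue $\frac{7+3\sqrt{5}}{2}$. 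Note what this example teaches: one does not need all eigenvalues of the group element to be distinct, only the largest and the smallest to be simple, which is why such a low-tech matrix works in every dimension $d$.

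Your number-theoretic construction is more uniform, but as written it has one small gap: a totally positive norm-one unit $u\in\mathcal{O}_K^{\times}$ need not have $d$ distinct conjugates --- it could lie in a proper subfield (for instance $u=1$ is totally positive of norm one, and multiplication by it is certainly not proximal). You must additionally require $\Q(u)=K$, so that the characteristic polynomial of the multiplication matrix is the degree-$d$ minimal polynomial and the eigenvalues really are $d$ distinct positive reals. Existence of such a unit needs a word: by Dirichlet's unit theorem the units whose squares lie in a fixed proper subfield $F$ form a subgroup of rank $[F:\Q]-1<d-1$, so the finitely many proper subfields cannot account for the squares of all units, and one may take $u=v^2$ with $\Q(v^2)=K$; one should also record that totally real fields of every degree $d$ exist. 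With these additions (and noting that the paper's convention $Ad(g)v=g^{-1}vg$ merely swaps which off-diagonal matrix unit carries the extreme ratio $\lambda_1/\lambda_d$), your proof of proximality goes through, and your closing remark that proximality is only Euclidean-open, so Zariski density alone cannot produce a proximal integer element, is a fair justification for why an explicit construction is needed in either approach.
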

\begin{proof}
It is proved in \cite{BF} [Corollary 5.4] that the adjoint action of $SL_{d}(\Z)$ on $Mat_{d}^0(\R)$ is strongly irreducible. Therefore,  it is remained to prove that there is at least one element of $SL_d(\Z)$ which acts on $Mat_{d}^0(\R)$ proximaly. Proposition \ref{sub_pro} below finishes the proof of the statement. 
\end{proof}

\begin{pro}\label{sub_pro}
The matrix 
\[
B_2 = \left[
\begin{array}{cc}
    1       & -1 \\
   -1 & 2\\
\end{array}
\right]
\]
acts (by conjugation) on $Mat_2^0(\R)$ proximally. 
For $d \geq 3$, the matrix
\[
B_d = \left[
\begin{array}{cc|c}
1 & -1& \mathbf{0}_{2 \times (d-2)}\\
-1 & 2 &\\
\hline
\mathbf{0}_{(d-2) \times (d-2)}& &  Id_{(d-2)\times (d-2)}
\\
\end{array}
\right]
\]
acts proximally on $Mat_d^0(\R)$. 
\end{pro}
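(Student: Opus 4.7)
The plan is to diagonalize $B_d$ explicitly and then invoke the standard formula expressing the eigenvalues of the conjugation action on $Mat_d(\R)$ as ratios of eigenvalues of the conjugating matrix; proximality then reduces to a direct comparison of these ratios.

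First, the characteristic polynomial of $B_2$ is $t^2-3t+1$, so $B_2$ has eigenvalues $\lambda_\pm=(3\pm\sqrt{5})/2$, both real and positive, with $\lambda_+\lambda_-=1$ and $\lambda_+>2>1>\lambda_->0$. For $d\ge 3$, the block decomposition gives $B_d=B_2\oplus Id_{d-2}$, so $B_d$ has spectrum $\{\lambda_+,\lambda_-,1,\ldots,1\}$ with $d-2$ copies of $1$.

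Next, pick an eigenbasis $v_1,\ldots,v_d$ of $B_d$ with dual basis $w_1,\ldots,w_d$, and define rank-one matrices by $E_{ij}(x)=w_j(x)v_i$. A direct computation gives $B_d^{-1}E_{ij}B_d=(\mu_j/\mu_i)E_{ij}$, where $\mu_k$ is the $B_d$-eigenvalue at $v_k$. Intersecting with $Mat_d^0(\R)$, the off-diagonal matrices $E_{ij}$ with $i\ne j$ remain eigenvectors with eigenvalue $\mu_j/\mu_i$, and the diagonal ones contribute a $(d-1)$-dimensional eigenspace for the eigenvalue $1$.

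For $d=2$ the spectrum of $Ad(B_2)$ on $Mat_2^0(\R)$ is $\{\lambda_+^2,\lambda_-^2,1\}$, each simple, so $\lambda_+^2$ is the unique eigenvalue of largest modulus with a one-dimensional eigenspace. For $d\ge 3$, enumerating the pairs $(i,j)$ with $i\ne j$ shows that the ratio $\lambda_+^2=\lambda_+/\lambda_-$ arises exactly once, while the remaining eigenvalues fall among $\{\lambda_-^2,\lambda_+,\lambda_-,1\}$. Since $\lambda_+^2>\lambda_+>1>\lambda_->\lambda_-^2>0$, the eigenvalue $\lambda_+^2$ is again strictly largest and simple, proving proximality. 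The only step requiring any care is verifying that no accidental coincidence among the ratios $\mu_j/\mu_i$ produces a second eigenvector for $\lambda_+^2$, which is transparent from the explicit spectrum $\{\lambda_+,\lambda_-,1\}$ of $B_d$.
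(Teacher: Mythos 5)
Your proof is correct, and it takes a genuinely different route from the paper. The paper verifies proximality by brute force: it writes out the matrix of $Ad(B_2)$ on $Mat_2^0(\R)\simeq\R^3$ and computes its characteristic polynomial $(1-\lambda)(\lambda^2-7\lambda+1)$, and for $d\ge 3$ it decomposes $Mat_d^0(\R)$ into the four corner blocks determined by the $2+(d-2)$ partition, computing the action on each block explicitly (a $4\times 4$ characteristic polynomial for the upper-left corner, and the matrices $C$, $C^{-1}$ with $C=\left[\begin{smallmatrix}2&1\\1&1\end{smallmatrix}\right]$ for the off-corner blocks). You instead diagonalize $B_d$ (spectrum $\{\lambda_+,\lambda_-,1,\dots,1\}$ with $\lambda_\pm=(3\pm\sqrt5)/2$, $\lambda_+\lambda_-=1$) and use the rank-one eigenvectors $v_iw_j^{t}$ of the conjugation action, whose eigenvalues are the ratios $\mu_j/\mu_i$; this yields the same spectrum $\{\lambda_+^2,\lambda_+,1,\lambda_-,\lambda_-^2\}$ with multiplicities $1,\,2(d-2),\,(d-2)^2+1,\,2(d-2),\,1$ that the paper obtains (its stated multiplicity $[(d-2)^2-1]+2$ for the eigenvalue $1$ agrees with yours), and the simplicity of $\lambda_+^2=\lambda_+/\lambda_-$ is immediate since each of $\lambda_\pm$ is a simple eigenvalue of $B_d$. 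Your approach buys conceptual clarity and scalability: it avoids all hand computation of characteristic polynomials, makes the multiplicities transparent, and shows more generally that any diagonalizable $g\in SL_d(\Z)$ whose largest and smallest eigenvalues are real, simple, and of distinct moduli from the rest acts proximally in the adjoint representation; the paper's computation is more elementary in that it needs no dual-basis or rank-one-decomposition setup, only linear algebra in coordinates. One point worth making explicit in your write-up (it is implicit in your construction) is that the $E_{ij}$ with $i\ne j$ together with the traceless diagonal combinations form a full eigenbasis of $Ad(B_d)$ on $Mat_d^0(\R)$, so the listed eigenvalues exhaust the spectrum and the top eigenvalue's eigenspace is indeed one-dimensional.
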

\begin{proof}
It is straithforward  to check that the  operator $B_2: Mat_{2}^0(\R) \to Mat_{2}^0(\R)$ can be written in the matrix form as\footnote{We use the identification between $Mat_{2}^0(\R)$ and $\R^3$, by 
\[
\left[
\begin{array}{cc}
    x      &y \\
   z & -x
\end{array}
\right] 
\to 
[ x,y,z ]^t
\]
}
\[
\left[
\begin{array}{rrr}
    3       & -2 & 1 \\
  -4 & 4 & -1\\
  2 & -1 & 1
\end{array}
\right].
\]
The characteristic polynomial  of this operator is $\chi_{B_2}(\lambda) = (1- \lambda) (\lambda^2 - 7\lambda + 1)$. Since all eigenvalues are distinct by their absolute value, it follows that the operator acts proximally.
\medskip

In the case $d  \geq 3$, notice that the action of $B_d$ on $Mat_d^0(\R)$ is decomposed into $4$ orthogonal spaces. The actions on the $2 \times 2$ upper left corner, $2 \times (d-2)$ upper right corner, $(d-2) \times 2$ bottom left corner, and the identity action on the bottom right $(d-2) \times (d-2)$ corner. Correspondingly, the dimensions of the spaces are $4, 2 \cdot (d-2), (d-2) \cdot 2,$ and $(d-2)^2 - 1$\footnote{We identify the vector space $Mat_d^0(\R)$ with $\R^{d^2-1}$ by omitting the $(d,d)$-entry of matrices in $Mat_d^0(\R)$}.  
\medskip

The $4$-dimensional left upper corner part can be written in the matrix form as\footnote{We choose the standard basis for $Mat_2(\R)$: 
\[
\left[
\begin{array}{cc}
    1      &0 \\
   0 & 0
\end{array}
\right], 
 \left[
\begin{array}{cc}
    0      &1 \\
   0 & 0
\end{array}
\right],
\left[
\begin{array}{cc}
    0     &0 \\
   1 & 0
\end{array}
\right],
\left[
\begin{array}{cc}
    0      &0 \\
   0 & 1
\end{array}
\right].
\] }

\[
\left[
\begin{array}{rrrr}
    2       & -2 & 1 & -1 \\
  -2 & 4 & -1 & 2\\
  1 & -1 & 1 & -1\\
  -1 & 2 & -1 &2
\end{array}
\right].
\]
Its characteristic polynomial is $(\lambda - 1)^2(\lambda^2 - 7 \lambda + 1)$. Therefore there is a unique highest eigenvalue by the absolute value equal to $\frac{7 + 3\sqrt{5}}{2}$, and it has multiplicity one. 
\medskip

The operator $B_d$ acts on the upper right corner  in the following way
\[
\left[
\begin{array}{cccc}
    x_1  &y_1 \\
    x_2 & y_2 \\
    \ldots & \ldots \\
    x_{d-2} & y_{d-2}
\end{array}
\right]^t \to 
\left[
\begin{array}{cc}
    2x_1 + y_1    &  x_1 + y_1 \\
    2x_2 + y_2 & x_2 + y_2 \\
     \ldots & \ldots \\
    2x_{d-2} + y_{d-2} &x_{d-2} + y_{d-2}
\end{array}
\right]^t. 
\]
It is clear that it has two eigenvalues with multiplicity $d-2$. These eigenvalues correspond to the eigenvalues of the matrix 
\[
 C = \left[
\begin{array}{rr}
    2       & 1 \\
   1 & 1
\end{array}
\right].
\]
These eigenvalues are the roots of the characteristic polynomial of the matrix $C$ which are $\frac{3 \pm \sqrt{5}}{2}$. 
\medskip

The operator $B_d$ acts on the bottom left corner in the following way:
\[
\left[
\begin{array}{cccc}
    x_1  &y_1 \\
    x_2 & y_2 \\
    \ldots & \ldots \\
    x_{d-2} & y_{d-2}
\end{array}
\right] \to 
\left[
\begin{array}{cc}
    x_1 - y_1    &  -x_1 + 2y_1 \\
    x_2 - y_2 & -x_2 + 2y_2 \\
     \ldots & \ldots \\
    x_{d-2} - y_{d-2} & -x_{d-2} + 2y_{d-2}
\end{array}
\right]. 
\]
Therefore it has two eigenvalues of the matrix $C^{-1}$ each one having multiplicity $d-2$. It is immediate to check that $C^{-1}$ has the same characteristic polynomial as $C$, therefore the eigenvalues of the operator $B_d$ acting on the bottom left corner are $\frac{3 \pm \sqrt{5}}{2}$, each one having multiplicity $d-2$.
 \medskip
 
 As the conclusion of the previous considerations we find the the eigenvalues of the operator $B_d$ are 
 $\frac{7 + 3\sqrt{5}}{2},\frac{3 + \sqrt{5}}{2}, 1, \frac{7 - 3\sqrt{5}}{2}, \frac{3 - \sqrt{5}}{2}$  with corresponding algebraic multiplicities equal to $1, 2(d-2), [(d-2)^2 -1] + 2, 1, 2(d-2)$. This implies that $B_d$ acts proximally on $Mat_d^0(\R)$.
\end{proof}

Let us denote by $A_d = V / \Lambda$, where $\Lambda = Mat_d^0(\Z)$. Notice that $A_d$ is isomorphic to $\T^{d^2-1}$, and it is the dual group of $\Lambda$. The adjoint action of $SL_d(\Z)$ leaves $\Lambda$ invariant. Therefore, $SL_d(\Z)$ also acts on $A_d$. Proposition \ref{action} implies by Theorem \ref{BFLM} the following statement.

\begin{pro}\label{equidistribution}
Let $\mu$ be a probability measure on $SL_d(\Z)$ with finite generating support. Let $x \in A_d$ be a non-rational point. Then the measures $\mu^{*k} \ast \delta_x$ converge as $k \to \infty$ in the weak$^*$ topology to the normalised Haar measure on $A_d$.
\end{pro}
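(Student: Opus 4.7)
The plan is to directly verify the hypotheses of Theorem \ref{BFLM} for the adjoint action on $A_d$, after identifying $A_d$ with the standard torus $\T^{d^2-1}$.

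First, I would set up the identification explicitly. Choose a $\Z$-basis of $\Lambda = Mat_d^0(\Z)$; this identifies $V = Mat_d^0(\R)$ with $\R^{d^2-1}$ and $\Lambda$ with $\Z^{d^2-1}$, hence $A_d$ with $\T^{d^2-1}$. Since the adjoint action $Ad(g)v = g^{-1}vg$ preserves $\Lambda$ and, as noted in the paper, has determinant one on $V$, this identification yields a homomorphism $Ad : SL_d(\Z) \to SL_{d^2-1}(\Z)$. Set $\Gamma := Ad(SL_d(\Z))$, which is the subgroup of $SL_{d^2-1}(\Z)$ to which we will apply Theorem \ref{BFLM}.

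Next, I would check the hypotheses. By Proposition \ref{action}, the adjoint action of $SL_d(\Z)$ on $Mat_d^0(\R)$ is strongly irreducible (which is exactly the condition of being totally irreducible in the sense of \cite{BFLM}) and contains a proximal element (the matrix $B_d$). Both properties depend only on the image $\Gamma$, so $\Gamma$ satisfies the structural hypotheses of Theorem \ref{BFLM}. Given the finite generating probability measure $\mu$ on $SL_d(\Z)$, its pushforward $\bar\mu := Ad_*\mu$ is a probability measure on $\Gamma$ with finite support; since $Ad$ is surjective onto $\Gamma$, $\bar\mu$ is generating for $\Gamma$. Finally, a point $x \in A_d$ is non-rational precisely when its image in $\T^{d^2-1}$ under our identification has at least one irrational coordinate (equivalently, it is not a torsion element of $A_d$), matching the non-rationality hypothesis in Theorem \ref{BFLM}.

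With all hypotheses verified, Theorem \ref{BFLM} gives $\bar\mu^{*k} \ast \delta_x \to m_{\T^{d^2-1}}$ in the weak$^*$ topology, where $m_{\T^{d^2-1}}$ is the normalised Haar measure. Since the action of $SL_d(\Z)$ on $A_d$ factors through $Ad$, we have $\mu^{*k} \ast \delta_x = \bar\mu^{*k} \ast \delta_x$ as measures on $A_d \cong \T^{d^2-1}$, and the normalised Haar measure on $A_d$ corresponds to $m_{\T^{d^2-1}}$ under the identification. This yields the claimed convergence.

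I do not anticipate a substantive obstacle: the two genuinely nontrivial inputs (strong irreducibility and the existence of a proximal element) are already packaged in Proposition \ref{action}, and the remainder of the argument is bookkeeping to ensure the identification of $A_d$ with $\T^{d^2-1}$ is compatible with the hypotheses of Theorem \ref{BFLM}. The only minor point to be careful about is that $Ad$ has a (finite) kernel when $d$ is even, but this is harmless because the statement only concerns the action, which factors through $\Gamma$.
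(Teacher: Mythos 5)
Your proof is correct and is essentially the paper's argument: the paper derives Proposition \ref{equidistribution} directly from Proposition \ref{action} together with Theorem \ref{BFLM}, exactly as you do, with your write-up merely making explicit the bookkeeping (the identification $A_d \simeq \T^{d^2-1}$, the pushforward of $\mu$ under $Ad$, and the harmless finite kernel) that the paper leaves implicit.
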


We will be using Proposition \ref{equidistribution} to prove the following:

\begin{thm}\label{equid2}
Let $\mu$ be a probability measure on $SL_d(\Z)$ with finite generating support. Let $\nu$ be a probability measure on $A_d$ with no atoms at rational points. Then the measures $\mu^{*k} \ast \nu$ converge as $k \to \infty$ in the weak$^*$ topology to the normalised Haar measure on $A_d$.
\end{thm}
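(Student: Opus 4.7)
The plan is to reduce Theorem~\ref{equid2} to Proposition~\ref{equidistribution} by disintegrating $\nu$ as an integral of Dirac masses. The point of the atom-free hypothesis on rational points is that the set of rational points in $A_d \cong \T^{d^2-1}$ is countable, so once $\nu$ gives zero mass to each such point it must give zero mass to the whole countable set. Hence $\nu$-almost every $x \in A_d$ is non-rational, and Proposition~\ref{equidistribution} applies at $\nu$-almost every base point.

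Concretely, I would fix $f \in C(A_d)$ and define
\[
\phi_k(x) := \int_{A_d} f\, d(\mu^{*k} \ast \delta_x) = \sum_{g \in SL_d(\Z)} \mu^{*k}(g)\, f(g \cdot x),
\]
where $g$ acts via the adjoint action on $A_d$. Since $\mu$ has finite support, $\mu^{*k}$ has finite support for each $k$, so $\phi_k$ is a finite linear combination of continuous functions and is in particular a bounded Borel function with $\|\phi_k\|_\infty \leq \|f\|_\infty$. From the definition of convolution recalled in the excerpt, Fubini yields
\[
\int_{A_d} f\, d(\mu^{*k} \ast \nu) = \int_{A_d} \phi_k(x)\, d\nu(x).
\]

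By Proposition~\ref{equidistribution}, for every non-rational $x$ we have $\phi_k(x) \to \int f\, dm$ as $k \to \infty$, where $m$ denotes the normalised Haar measure on $A_d$. By the observation above, this convergence holds $\nu$-almost everywhere. The dominated convergence theorem with the uniform bound $\|f\|_\infty$ then gives
\[
\int_{A_d} \phi_k(x)\, d\nu(x) \longrightarrow \int_{A_d} \left(\int_{A_d} f\, dm\right) d\nu(x) = \int_{A_d} f\, dm,
\]
which is exactly the claimed weak$^*$ convergence of $\mu^{*k} \ast \nu$ to $m$.

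I do not anticipate a serious obstacle in this argument: all the substantive work, namely the pointwise equidistribution for non-rational initial points, has already been carried out in Proposition~\ref{equidistribution} via the Bourgain--Furman--Lindenstrauss--Mozes theorem. The only non-trivial checks are (i) that the rational points of $A_d$ form a $\nu$-null set, which uses countability together with the atom-free hypothesis, and (ii) the interchange of the sum over $g$ and the integral against $\nu$, which is justified by the finiteness of $\mathrm{supp}(\mu^{*k})$ for each fixed $k$.
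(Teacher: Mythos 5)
Your proposal is correct and follows essentially the same route as the paper: reduce to the pointwise convergence of Proposition~\ref{equidistribution}, note that the countability of the rational points together with the no-atom hypothesis makes the non-rational points a set of full $\nu$-measure, interchange the finite sum over the support of $\mu^{*k}$ with the integral against $\nu$, and pass to the limit. The only difference is cosmetic: you justify the final limit interchange by the dominated convergence theorem with the uniform bound $\|f\|_\infty$, whereas the paper uses Egorov's theorem; your version is if anything slightly more streamlined.
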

\begin{proof}
Let $\nu$ be a probability measure on $A_d$ with no atoms at rational points, and let $\mu$ be a probability measure on $\Gamma = SL_d(\Z)$ with a finite generating support. By Proposition \ref{equidistribution}  for every non-rational $x \in A_d$ the measures $\mu^{*k} \ast \delta_x$ converge in weak$^*$-topology as $k \to \infty$ to the Haar measure on $A_d$. Let $f $ be a continuous function on $A_d$. Denote by $A_d'$ the set of all non-rational points of $A_d$. Notice that $\nu(A_d') = 1$.
Then for every $x \in A_d'$ we have that $f_k(x) := \int f d \left(\mu^{*k} \ast \delta_x \right)\to \int f$. We have to show that 
\[
\int_{A_d} f d \left(  \mu^{*k} \ast \nu \right) \to \int f. 
\]
By Egorov's theorem, for every $\eps > 0$, there exists $X' \subset A_d'$ with $\nu(X') \geq 1-\eps$ and $K(\eps)$ with the property that for every $x \in X'$ and every $k \geq K(\eps)$ we have
\[
\left| f_k(x) - \int f \right|< \eps.
\]
Notice that 
\[
\int f d \left( \mu^{*k} \ast \nu \right)= \sum_{g \in \Gamma} \int f(gx) \mu^{*k}(g) d \nu(x) = \int \left( \sum_{g \in \Gamma} f(gx) \mu^{*k}(g)\right) d \nu(x)
\]
\[
= \int \left( \int f d \left( \mu^{*k} \ast \delta_x \right) \right) d \nu(x) = \int f_k(x) d \nu(x). 
\]
Let $\delta > 0$. Denote by $M = \| f \|_{\infty}$, and take $\eps > 0$ so small that $\eps M < 2\delta$, and $\eps < \delta$. Then we have
\[
\left| \int f d \left( \mu^{*k} \ast \nu \right) - \int f \right|  = \left| \int \left( f_k(x) - \int f \right) d \nu(x)\right| < (1 - \eps)\eps + \eps M < 3 \delta,
\]
for $k \geq K(\eps)$. Since $\delta$ can be chosen arbitrary small, we have shown that 
\[
\int f d \left( \mu^{*k} \ast \nu \right) \to \int f.
\]
This finishes the proof because the function $f$ was an arbitrary continuous function on $A_d$.
\end{proof}

\section{Proofs of Theorems \ref{techn_thm}, and \ref{PWBohrthm}}
\label{sec3}
\medskip

First, we will prove a very useful statement.

\begin{lem}
\label{character_lem}
Let $G$ be a countable abelian group, and let $(F_n) \subset G$ be a F\o lner sequence. Let $\chi \in \hat{G}$ be a non-trivial character. Then we have
\[
\frac{1}{|F_n|} \sum_{g \in F_n} \chi(g) \to 0, \mbox{ as } n \to \infty.
\]
\end{lem}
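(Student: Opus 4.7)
The plan is to exploit the non-triviality of $\chi$ together with the asymptotic invariance of the Følner sequence, via a standard ``translate and subtract'' trick.

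First, since $\chi$ is non-trivial, I would pick some $h \in G$ with $\chi(h) \neq 1$. Setting $S_n := \sum_{g \in F_n} \chi(g)$, I would compute
\[
\chi(h) S_n = \sum_{g \in F_n} \chi(g+h) = \sum_{g' \in F_n + h} \chi(g'),
\]
by reindexing via $g' = g + h$. Subtracting $S_n$ from both sides, the common terms (those $g'$ lying in $F_n \cap (F_n + h)$) cancel, leaving
\[
(\chi(h) - 1) S_n = \sum_{g' \in (F_n + h) \setminus F_n} \chi(g') - \sum_{g' \in F_n \setminus (F_n + h)} \chi(g').
\]

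Next I would estimate in absolute value using $|\chi(g')| = 1$:
\[
|\chi(h) - 1| \cdot |S_n| \leq |(F_n + h) \setminus F_n| + |F_n \setminus (F_n + h)| = |(F_n + h) \triangle F_n|.
\]
The Følner condition $|F_n \cap (F_n + h)|/|F_n| \to 1$ implies $|(F_n + h) \triangle F_n|/|F_n| \to 0$. Dividing by $|F_n|$ and using that $|\chi(h) - 1| > 0$ is a fixed positive constant, I obtain $|S_n|/|F_n| \to 0$, which is exactly the claim.

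No step here looks delicate: the only potential friction is being careful that the Følner property as defined in the paper (stated in terms of $|F_n \cap (F_n + g)|/|F_n| \to 1$) translates into smallness of the symmetric difference, which is immediate since $|(F_n + h) \triangle F_n| = 2(|F_n| - |F_n \cap (F_n + h)|)$ (both $F_n$ and $F_n + h$ have the same cardinality). So the proof is essentially a three-line calculation.
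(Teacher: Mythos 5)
Your proof is correct and rests on the same idea as the paper's: translating the sum by an element $h$ with $\chi(h)\neq 1$ and invoking the F\o lner property. The only difference is cosmetic --- the paper passes to a subsequence along which $\frac{1}{|F_n|}\sum_{g\in F_n}\chi(g)$ converges and derives the identity $F(\chi)=\chi(h)F(\chi)$, whereas you bound $|\chi(h)-1|\,|S_n|$ directly by the symmetric difference $|(F_n+h)\triangle F_n|$, which gives a slightly more quantitative (and subsequence-free) version of the same argument.
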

\begin{proof}
Let $(n_k)$ be a sequence along which the limit of 
\[
\frac{1}{|F_{n_k}|} \sum_{g \in F_{n_k}} \chi(g) 
\]
exists. Let us denote the limit by $F(\chi)$. For every $h \in G$ we have:
\[
\frac{1}{|F_{n_k}|} \sum_{g \in F_{n_k}} \chi(g+h) = \chi(h) \frac{1}{|F_{n_k}|} \sum_{g \in F_{n_k}} \chi(g) \to \chi(h) F(\chi).
\] 
On the other hand,  by use of F\o lner property of $(F_{n_k})$ we obtain:
\[
\frac{1}{|F_{n_k}|} \sum_{g \in F_{n_k}} \chi(g+h) = \frac{1}{|F_{n_k}|} \sum_{g \in F_{n_k} + h} \chi(g) \to F(\chi). 
\]
Therefore, we have for every $h \in G$:
\[
F(\chi) = \chi(h) F(\chi).
\]
Since $\chi \not \equiv 1$, we get that $F(\chi) = 0$. The statement of the lemma follows, since our conclusion is independent of the subsequence $(n_k)$. 
\end{proof}

Recall that $\Lambda = Mat_d^0(\Z)$, and we denote by $\Gamma = SL_d(\Z)$. We make the identification of the dual space of $\Lambda$ with the torus $A_d = Mat_d^0(\R) / Mat_d^0(\Z) \simeq \T^{d^2-1}$ by corresponding for every $x \in A_d$  the character $\chi_x$ on $\Lambda$ given by:
\[
\chi_x(h) = \exp{(2 \pi i\langle x, h \rangle)}, \mbox{ for } h \in \Lambda.
\]
Notice that the trivial character on $\Lambda$ corresponds to the zero element $o_{A_d}$ of $A_d$, and characters having finite torsion correspond to the rational points of $A_d$. 
 \medskip
 
\noindent \textbf{Proof of Theorem \ref{techn_thm}.}
Let $B \subset \Lambda$ be a set of positive density with Furstenberg's system $X_B = (X,\eta,\sigma,\tilde{B})$ and such that the spectral measure of $B$ has no atoms at non-trivial characters. 
Denote by $\nu$ the spectral measure of $B$, i.e., for every $h \in \Lambda$ we have
\begin{equation}\label{Fourier}
\langle 1_{\tilde{B}}, \sigma(h) 1_{\tilde{B}} \rangle = \int_{A_d}  \exp{\left(2 \pi i \langle x, h \rangle\right)} d \nu(x).
\end{equation}
By the assumptions of the theorem, $\nu$ has no atoms at the rational points.  We will show that for every $h \in \Lambda$ there exists $g \in \Gamma$ such that 
\[
\hat{\nu}(g^{-1}hg) = \langle 1_{\tilde{B}}, \sigma(g^{-1}hg) 1_{\tilde{B}} \rangle  > 0.
\]
This will imply the claim of the theorem by the first property of Furstenberg's system $X_B$. Assume, that on the contrary, there exists $h \in \Lambda$ such that for all $g \in \Gamma$ we have 
\begin{equation}\label{contr}
\hat{\nu}(g^{-1}hg) = 0.
\end{equation}
Since for $h = 0_{\Lambda}$ we have $g^{-1} 0_{\Lambda} g = 0_{\Lambda}$, and $\hat{\nu}(0_{\Lambda}) = \eta(\tilde{B}) > 0$, we conclude that there exists a non-zero $h \in \Lambda$ such that $(\ref{contr})$ holds for all $g \in \Gamma$.
%By the assumptions on $B$, we know that 
%\medskip
%
%\begin{itemize}
%\item $\nu(\{o_{A_d}\}) = \eta(\tilde{B})^2 > 0$.\\
%\item For every rational non-zero point $x \in A_d$ we have  $\nu(\{x\}) = 0$.
%\end{itemize} 
%\medskip

\noindent For any F\o lner sequence $(F_n)$ in $\Lambda$:
\begin{equation}\label{erg}
\frac{1}{|F_n|} \sum_{h \in F_n} \langle 1_{\tilde{B}}, \sigma(h) 1_{\tilde{B}} \rangle =
\end{equation}
\[
\int_{A_d} \frac{1}{|F_n|} \sum_{h \in F_n} \exp{(2 \pi i \langle x, h \rangle)} d \nu(x) \to \nu(\{o_{A_d}\}), \mbox{ as } N \to \infty.
\]
In the last transition, we have used Lebesgue's dominated convergence theorem and Lemma \ref{character_lem}.
By ergodicity of Furstenberg's system and von-Neumann's ergodic theorem it follows that the left hand side of (\ref{erg}) satisfies
\[
\frac{1}{|F_n|} \sum_{h \in F_n} \langle 1_{\tilde{B}}, \sigma(h) 1_{\tilde{B}} \rangle \to \eta(\tilde{B})^2, \mbox{ as } n \to \infty. 
\] 
Altogether it implies that
\[
\nu(\{o_{A_d}\}) = \eta(\tilde{B})^2 > 0.
\]
\smallskip

Let $\mu$ be a probability measure on $\Gamma$ having a finite generating support. By Proposition \ref{equid2} the measures $\mu^{*k} \ast \nu$ converge as $k \to \infty$ in weak$^*$-topology to 
\[ \eta(\tilde{B})\left(1 - \eta(\tilde{B})\right) m_{A_d} + \eta(\tilde{B})^2 \delta_{o_{A_d}}, \] where $m_{A_d}$ stands for the normalised Haar measure on $A_d$. Notice that $\Gamma$ also acts on $A_d$ by $g \cdot x = (g^{t})^{-1} x g^t$, for $g \in \Gamma$. The action of $\Gamma$ on $A_d$ and the adjoint action of $\Gamma$ on $\Lambda$ are related by the following: 
\[
\langle (g \cdot x), h \rangle = \langle x , Ad(g)h \rangle, \mbox{ for every } g \in \Gamma, h \in \Lambda, x \in A_d.
\]
Notice 
\[
\hat{\mu^{*k} \ast \nu}(h) = \int_{A_d} \exp{(2 \pi i \langle x, h \rangle)} d\left( \mu^{*k} \ast \nu \right)(x) =
\]
\[
 \int_{A_d} \left( \sum_{g \in \Gamma}  \exp{(2 \pi i \langle (g \cdot x), h \rangle)} \mu^{*k}(g) \right)  d \nu(x)=
\]
\[
 \sum_{g \in \Gamma} \left( \int_{A_d}   \exp{(2 \pi i \langle x , \left( g^{-1} h g \rangle\right))}  d \nu(x) \right)  \mu^{*k}(g) = 
\sum_{g \in \Gamma}  \hat{\nu}(g^{-1}hg) \mu^{*k}(g).
\]
Recall,  we assumed that there exists a non-zero $h \in \Lambda$ such that $\hat{\nu}(g^{-1}hg) = 0$, for all $g \in \Gamma$. Therefore, we have $\hat{\mu^{*k} \ast \nu}(h) = 0$, for all $k \geq 1$. On other hand, since $\hat{m_{A_d}}(h) = 0$, and $\hat{\delta_{o_{A_d}}}(h) = 1$, we have:
\[
\hat{\mu^{*k} \ast \nu}(h) \to  \eta(\tilde{B})^2 > 0, \mbox{ as } k \to \infty.
\]
Thus, we have a contradiction. This finishes the proof of the theorem.

\qed
\bigskip

\noindent \textbf{Proof of Theorem \ref{PWBohrthm}.}
We will use the following statement which will be proved below.
\medskip

\begin{pro}\label{aperiodicity}
Let $B \subset \Lambda$ be a non-periodic piecewise Bohr set corresponding to a Jordan measurable\footnote{A set $A$ in a topological space $X$ equipped with a measure $m_X$ is \textit{Jordan measurable} if $m_X( \partial A) = 0$, where $\partial{A} = \overline{A} \setminus \overset{\circ}{A}$.} open set in a finite-dimensional torus. Then there exists a spectral measure  associated with $B$ that does not have atoms at non-zero rational points of $A_d$.
\end{pro}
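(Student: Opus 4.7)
The plan is to construct an explicit Furstenberg system for $B$ whose correlations factor through the Bohr torus $\T^n$; once this is done, the spectral measure can be read off from the Fourier expansion of $1_U$, and non-periodicity rules out rational atoms by an elementary subgroup argument.

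Write $B = B_0 \cap T$ with $B_0 = \tau^{-1}(U)$, where $\tau : \Lambda \to \T^n$ has dense image and $U \subset \T^n$ is a Jordan-measurable open set. First I would choose a F\o lner sequence $(F_n)$ in $\Lambda$ with $|F_n \cap T^c|/|F_n| \to 0$, which is possible because $d^*(T) = 1$. After passing to a subsequence, the empirical measures $\mu_n := |F_n|^{-1} \sum_{h \in F_n} \delta_{\tau(h)}$ converge weak-$*$ to a probability measure $\mu$ on $\T^n$. A standard F\o lner computation shows that $\mu$ is invariant under rotation by every element of $\tau(\Lambda)$; by non-periodicity $\tau(\Lambda)$ is dense in $\T^n$, so $\mu$ is $\T^n$-invariant and hence equals the Haar measure $m$. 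The Jordan measurability of $U$ upgrades this to
\[
\frac{|B_0 \cap (B_0 - h) \cap F_n|}{|F_n|} \longrightarrow m\bigl(U \cap (U - \tau(h))\bigr) \quad \text{for every } h \in \Lambda,
\]
and since for each fixed $h$ the discrepancy between $B_0 \cap (B_0 - h)$ and $B \cap (B - h)$ lies in $T^c \cup (T - h)^c$, whose $F_n$-density tends to $0$ by the choice of $(F_n)$ together with the F\o lner property applied to the fixed shift $h$, the same limit holds with $B$ in place of $B_0$.

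Applying Furstenberg's correspondence principle along $(F_n)$ (via the orbit-closure construction in $\{0,1\}^\Lambda$ followed by an ergodic decomposition consistent with the above averages) produces an ergodic Furstenberg system $X_B = (X, \eta, \sigma, \tilde B)$ with
\[
\eta(\tilde B) = m(U) = d^*(B), \qquad \hat{\nu}(h) = \langle 1_{\tilde B}, \sigma(h) 1_{\tilde B} \rangle = m\bigl(U \cap (U - \tau(h))\bigr).
\]
Expanding $1_U = \sum_{k \in \Z^n} c_k e^{2 \pi i \langle k, \cdot \rangle}$ in Fourier series on $\T^n$ and computing gives $\hat{\nu}(h) = \sum_{k \in \Z^n} |c_k|^2 e^{2 \pi i \langle k, \tau(h) \rangle}$, so by uniqueness in Bochner's theorem
\[
\nu = \sum_{k \in \Z^n} |c_k|^2 \, \delta_{y_k},
\]
where $y_k \in A_d$ is the point representing the character $h \mapsto e^{2 \pi i \langle k, \tau(h) \rangle}$ of $\Lambda$.

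Finally, suppose $y_k$ were a rational point of $A_d$ for some $k \neq 0$. Then $\chi_{y_k}$ would have some finite order $N \geq 1$, giving $\tau(\Lambda) \subset \{x \in \T^n : \langle Nk, x \rangle \in \Z\}$, a proper closed subgroup of $\T^n$ since $Nk \neq 0$. This contradicts density of $\tau(\Lambda)$ and shows that $\nu$ has no atoms at non-zero rational points. The main technical obstacle is the first step: arranging a single F\o lner sequence that simultaneously witnesses $d^*(T) = 1$ \emph{and} equidistributes $\tau(F_n)$ to Haar measure on $\T^n$. Both are needed, and this is precisely where the non-periodicity hypothesis enters, through the observation that any weak-$*$ limit of the $\mu_n$ must be $\T^n$-invariant and therefore Haar.
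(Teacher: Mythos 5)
Your overall architecture is right, but there is one step that does not hold as written: the passage ``orbit-closure construction in $\{0,1\}^\Lambda$ followed by an ergodic decomposition consistent with the above averages.'' The weak$^*$ limit $\eta_0$ of the empirical measures along your F\o lner sequence does satisfy $\eta_0(\tilde{B} \cap \sigma_h\tilde{B}) = m\bigl(U \cap (U-\tau(h))\bigr)$ for all $h$ (after a diagonal extraction over the countably many $h$), but $\eta_0$ need not be ergodic, and an ergodic decomposition does not preserve the correlation sequence: each value $\eta_0(\tilde{B}\cap\sigma_h\tilde{B})$ is only the average of the corresponding values over the ergodic components, and nothing guarantees that a single component realizes all of them simultaneously. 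Your subsequent identification $\nu = \sum_k |c_k|^2\,\delta_{y_k}$ relies on the \emph{exact} equality $\hat{\nu}(h) = m\bigl(U\cap(U-\tau(h))\bigr)$ for the ergodic system you end up with, so this is a genuine gap: with only inequalities relating $\hat{\nu}$ to these averages you lose all control of the atoms of $\nu$.

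The repair is immediate, and it is exactly what the paper does: do not pass through $\{0,1\}^\Lambda$ at all, but take the Kronecker system itself as the Furstenberg system, i.e.\ $X=\T^n$ with Haar measure, $\sigma_h(x)=x+\tau(h)$ and $\tilde{B}=U$ (the paper shrinks to a Jordan-measurable ball inside $U$). Density of $\tau(\Lambda)$ makes this system uniquely ergodic, so your equidistribution computation along a F\o lner sequence contained in $T$ gives $d^*\bigl(B\cap(B+h)\bigr) \geq \eta(\tilde{B}\cap\sigma_h\tilde{B})$ and $d^*(B)=\eta(\tilde{B})$, and now the spectral measure is precisely your atomic measure $\sum_k |c_k|^2\delta_{y_k}$, with no decomposition needed. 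With that fix, your concluding argument is correct and is genuinely different from the paper's: you rule out a non-zero rational atom by observing that if the character $h\mapsto e^{2\pi i\langle k,\tau(h)\rangle}$, $k\neq 0$, had finite order $N$, then $\tau(\Lambda)$ would lie in the proper closed subgroup $\{x\in\T^n : \langle Nk,x\rangle\in\Z\}$, contradicting density; the paper instead never computes $\nu$ explicitly and argues by total ergodicity — the finite-index subgroups $q\Lambda$ act uniquely ergodically on the connected torus (Lemmas \ref{lem1} and \ref{lem2}), and ergodic averages along $q\Lambda$, together with the non-negativity of the spectral measure of $1_{\tilde{B}}$, annihilate all atoms at non-zero points of order dividing $q$. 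Both endings are valid; yours is more explicit because for this particular system the spectral measure is purely atomic and computable, while the paper's argument avoids the Fourier expansion and works directly from ergodic averages.
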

\medskip

Let $B \subset \Lambda$ be a piecewise non-periodic Bohr set given by $B = \tau^{-1}(U) \cap T$, where $\tau:\Lambda \to \T^n$ is a homomorphism with a dense image, $U \subset \T^n$ is an open set, and $T \subset \Lambda$ is a set with $d^*(T) = 1$. Then $U$ contains an open ball $U_o$, and $m_{\T^n}(\partial U_o) = 0$, where $m_{\T^n}$ denotes the Haar normalised measure on $\T^n$. Denote by $B' = \tau^{-1}(U_o) \cap T \subset B$. The statement of Theorem \ref{PWBohrthm} for the non-periodic piecewise Bohr set $B'$ follows from Proposition \ref{aperiodicity} and Theorem \ref{techn_thm}. The latter implies the statement of the theorem for the set $B$.
\qed

%Combining Propositions \ref{equid2} and \ref{aperiodicity} we obtain the following claim.
%
%\begin{cor}\label{cor1}
%Let $\mu$ be a probability measure with generating finite support on $SL_d(\Z)$, and let $B_0 \subset Mat_d^0(\Z)$ be a non-periodic Bohr set. Let $\nu$ be the spectral measure corresponding to $B_0$ via the equation (\ref{spectrmeas}). Then the measures $\mu^{*k} \ast \nu $ converge as $k \to \infty$ to $m_{\T^n}(U_0) \delta_{o_{A_d}} + (1-m_{\T^n}(U_0)) m_{A_d}$.
%\end{cor}
%
%\noindent \textbf{Proof of Main Theorem.}
%Let $B$ be a non-periodic Bohr-zero set in $Mat_d^0(\Z)$, and let $B_0$ be a non-periodic Bohr set satisfying that $B_0 - B_0 \subset B$. Assume that $B_0$ is given by a homomorphism $\tau: Mat_d^0(\Z) \to \T^n$, and the open  set $U_0$ satisfying $B_0 = \tau^{-1}(U_0)$. As the implication of Corollary \ref{cor1}, for every matrix $h \in Mat_d^0(\Z)$ there exists an element $g \in SL_d(\Z)$ such that 
%\[m_{\T^n}\left(U_0 \cap (U_0 + \tau(g^{-1}h g))\right) > 0.\] 
%This implies that $g^{-1}hg \subset \tau^{-1}(U_0 - U_0) = B_0 - B_0 \subset B$. The latter implies the conclusion of the theorem.

\bigskip

\noindent \textbf{Proof of Proposition \ref{aperiodicity}.}
We are given a piecewise Bohr non-periodic set $B \subset \Lambda$ corresponding to a Jordan measurable open set in a finite dimensional torus. This means that $B = B_o \cap T$, where  $T \subset \Lambda$ with $d^*(T) = 1$, and $B_o = \tau^{-1}(U_o) \subset \Lambda$, where  
$\tau:\Lambda \to \T^n$, for some $n \geq 1$, is a homomorphism with a dense image, and  $U_o \subset \T^n$ is an open Jordan measurable set.
We will construct an ergodic Furstenberg's $\Lambda$-system $X_B = (X,\eta,\sigma,\tilde{B})$ corresponding to the set $B$, and will show that the spectral measure of the function $1_{\tilde{B}}$ has no atoms at the rational non-zero points of $A_d : = \hat{\Lambda}$.

Let $X = \T^n$, $\eta$ be the Haar normalised measure on $X$, $\sigma_h(x) := x + \tau(h)$ for $x \in X, h \in \Lambda$, and $\tilde{B} = U_o$. We will denote by $X_B := (X,\eta,\sigma, \tilde{B})$. It remains to show that 
\medskip

\begin{itemize}
\item For every $h \in \Lambda$ we have $d^*\left(B \cap (B +h)\right) \geq \eta(\tilde{B} \cap \sigma_h(\tilde{B}))$.\\
\item $\eta(\tilde{B}) = d^*(B)$.\\
\item The spectral measure of $1_{\tilde{B}}$ has no atoms at non-zero rational points of $A_d$.
\end{itemize}
\medskip
The first two properties will follow from the statement that for every $h \in \Lambda$:
\[
d^*\left(B \cap (B +h)\right) = \eta(\tilde{B} \cap \sigma_h(\tilde{B})).
\]
First, notice that for every $h \in \Lambda$ the set $U_o \cap \sigma_h(U_o)$ is Jordan measurable. The uniqueness of $\sigma$-invariant probability measure on $X$ implies the unique ergodicity of $X_B$. Therefore, for every F\o lner sequence $(F_k)$ in $\Lambda$ and any $h \in \Lambda$ we have
\[
\frac{1}{|F_k|} \sum_{g \in F_k} 1_{U_0 \cap \sigma_h(U_0)}(\sigma_g(0_X)) \to
\int_X 1_{U_o \cap \sigma_h(U_o)}(x) d\eta(x) =  \eta\left( \tilde{B} \cap \sigma_h(\tilde{B})\right),
\]
as $k \to \infty$. Since the left hand side of the last equation is equal to $\frac{|B_o \cap (B_o +h) \cap F_k|}{|F_k|}$, we obtain
\begin{equation}
\frac{|B_o \cap (B_o +h) \cap F_k|}{|F_k|} \to   \eta\left( \tilde{B} \cap \sigma_h(\tilde{B})\right), \mbox{ as } k \to \infty.
\label{limit}
\end{equation}
Since $B \subset B_0$, the latter implies that for every $h \in \Lambda$ we have
\begin{equation}
\label{ineq_upper}
 \eta\left( \tilde{B} \cap \sigma_h(\tilde{B})\right) \geq d^*(B \cap (B +h)).
\end{equation}
On the other hand, for any F\o lner sequence $(F_k)$ which lies inside the thick set $T$ by identity ($\ref{limit}$) we have for every $ h \in \Lambda$:
\[
\frac{|B \cap (B_0 +h) \cap F_k|}{|F_k|} \to  \eta\left( \tilde{B} \cap \sigma_h(\tilde{B})\right), \mbox{ as } k \to \infty.
\]
By use of F\o lner property of the sequence $(F_k)$, we have that 
\[
\left| \frac{|B \cap (B_0 +h) \cap F_k|}{|F_k|} - \frac{|(B-h) \cap B_0  \cap F_k|}{|F_k|} \right| \to 0, \mbox{ as } k \to \infty.
\]
But, since $F_k \subset T$ it follows that for every $k \geq 1$ we have: 
\[
\frac{|(B-h) \cap B_0  \cap F_k|}{|F_k|} = \frac{|(B-h) \cap B  \cap F_k|}{|F_k|}.
\]
Finally, since 
\[
\frac{|(B-h) \cap B  \cap F_k|}{|F_k|} = \frac{|B \cap (B+h)  \cap (F_k+h)|}{|F_k|},
\]
and F\o lner property implies that 
\[
\left| \frac{|B \cap (B+h)  \cap (F_k+h)|}{|F_k|} - \frac{|B \cap (B+h)  \cap F_k|}{|F_k|}\right|, \mbox{ as } k \to \infty,
\]
we obtain that
\[
\frac{|B \cap (B +h) \cap F_k|}{|F_k|} \to  \eta\left( \tilde{B} \cap \sigma_h(\tilde{B})\right), \mbox{ as } k \to \infty.
\]
This establishes that for every $h \in \Lambda$:
\[
d^*\left(B \cap (B +h)\right) \geq \eta(\tilde{B} \cap \sigma_h(\tilde{B})).
\]
Together with the idenity (\ref{ineq_upper}) this implies that for every $h \in \Lambda$ we have
\[
d^*\left(B \cap (B +h)\right) = \eta(\tilde{B} \cap \sigma_h(\tilde{B})).
\]
It remains to prove that the spectral measure corresponding to $1_{\tilde{B}}$ and the system $X_B$ has no atoms at non-zero rational points of $A_d$.
We will be abusing the notation and will also use $T$ to denote the Koopman operator on $L^2(X)$ corresponding to $\sigma$. 
Let us list two important properties of the system  $X_B$:
\medskip

\begin{enumerate}
\item \label{one} $X_B$ is \textit{totally ergodic}, i.e., every subgroup $H < \Lambda$ of a finite index acts ergodically on $X_B$.\\
\item\label{three} For every $ f \in L^2(X)$ there exists the spectral measure $\mu_f$ of $f$ on $A_d$ satisfying: 
\[
\hat{\mu_f}(h) := \int_{A_d} \exp{(2 \pi h \cdot x)} d \mu_f(x) = \langle f, T_h f \rangle. 
\] 
Moreover, if $f \geq 0$, then $\mu_f$  is non-negative.
\end{enumerate}
\medskip

The first property follows from Lemma \ref{lem2}, while the second property is Bochner's spectral theorem, see \cite{Fo}. To prove Lemma \ref{lem2} we will need the following result.

\begin{lem}\label{lem1}
Let $H < \Lambda$ be a subgroup of a finite index. Then for every point $x \in X$, the $H$-orbit of $x$, i.e., $\{\sigma_h(x) \, | \, h \in H\}$, is dense in $X$.
\end{lem}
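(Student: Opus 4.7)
\textbf{Proof Proposal for Lemma \ref{lem1}.} Since $\sigma_h(x) = x + \tau(h)$ acts by translation, the orbit $\{\sigma_h(x) : h \in H\}$ equals $x + \tau(H)$, which is dense in $X = \T^n$ if and only if $\tau(H)$ is dense in $\T^n$. So the plan is to prove that $\tau(H)$ is dense in $\T^n$ for every finite index subgroup $H < \Lambda$.

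Let $K := \overline{\tau(H)}$, which is a closed subgroup of $\T^n$. Since $[\Lambda : H] = k < \infty$, I can decompose $\Lambda$ into finitely many cosets $\Lambda = \bigsqcup_{i=1}^k (g_i + H)$, and then
\[
\tau(\Lambda) \;=\; \bigcup_{i=1}^k \bigl(\tau(g_i) + \tau(H)\bigr) \;\subset\; \bigcup_{i=1}^k \bigl(\tau(g_i) + K\bigr).
\]
The right-hand side is a finite union of closed sets, hence closed, so passing to closures on the left and using the density of $\tau(\Lambda)$ in $\T^n$ gives $\T^n = \bigcup_{i=1}^k (\tau(g_i) + K)$. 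Thus $K$ has finite index in $\T^n$.

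To conclude, I would invoke connectedness: since $K$ has finite index in $\T^n$, its complement is a finite union of closed cosets, hence closed, so $K$ is both open and closed. As $\T^n$ is connected and $K$ is nonempty, $K = \T^n$, i.e., $\tau(H)$ is dense. This yields the claim.

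The only place that requires care is the step where finite index forces $K = \T^n$; this uses connectedness of the torus in an essential way and is the conceptual heart of the argument. Everything else is routine manipulation of cosets and closures, so I expect no serious obstacle.
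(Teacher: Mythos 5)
Your argument is correct and is essentially the paper's own proof: the paper likewise notes that if $\overline{\tau(H)} \neq X$ then finitely many translates of the closed subgroup $\overline{\tau(H)}$ cover $X$ (via the coset decomposition of $\Lambda$ and density of $\tau(\Lambda)$), which contradicts connectedness of the torus. You simply spell out the clopen/coset details that the paper leaves implicit.
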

\begin{proof}
If $\overline{\tau(H)} \neq X$, then since $H < \Lambda$ has a finite index, it follows that finitely many translates of $\overline{\tau(H)}$ cover $X$. But $X$ is connected, and we get a contradiction.

% The statement of the lemma follows from two  facts that utilise the connectivity of $X$, and Baire Category theorem: 
%\medskip
%
%\begin{itemize}
%\item For every subgroup $H < \Lambda$ the closed subgroup $\overline{\tau(H)} < X$ is nowhere dense. \\
%\item Finite union of nowhere dense sets cannot cover $X$.
%\end{itemize}
\end{proof}

\begin{lem}\label{lem2}
Let $H < \Lambda$ be a subgroup of a finite index. The restriction of the $\Lambda$-action of $X$ to $H$ is uniquely ergodic.
 \end{lem}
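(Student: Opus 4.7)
The plan is to deduce unique ergodicity directly from the density provided by Lemma \ref{lem1} via a standard Fourier argument on $\T^n$. Since translations preserve Haar measure, $\eta$ is $\Lambda$-invariant, hence $H$-invariant; the content of the lemma is therefore the uniqueness of the $H$-invariant probability measure on $X = \T^n$.

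Let $\mu$ be any $H$-invariant Borel probability measure on $X$. For each character $k \in \Z^n \cong \hat{X}$, set
\[
\hat{\mu}(k) = \int_X e^{-2 \pi i \langle k, x \rangle}\, d\mu(x).
\]
The invariance $(\sigma_h)_* \mu = \mu$ gives, for every $h \in H$,
\[
\hat{\mu}(k) = \int_X e^{-2 \pi i \langle k, x + \tau(h) \rangle}\, d\mu(x) = e^{-2 \pi i \langle k, \tau(h) \rangle}\, \hat{\mu}(k).
\]
If $k \neq 0$, the character $x \mapsto e^{2 \pi i \langle k, x \rangle}$ is non-trivial on $\T^n$, and by Lemma \ref{lem1} the subgroup $\tau(H)$ is dense in $\T^n$; hence there exists $h \in H$ with $e^{-2 \pi i \langle k, \tau(h) \rangle} \neq 1$. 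The displayed identity then forces $\hat{\mu}(k) = 0$. Combined with $\hat{\mu}(0) = 1$, this shows that $\mu$ has the same Fourier coefficients as $\eta$, so $\mu = \eta$.

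I do not anticipate a serious obstacle here: the whole substance of the lemma lies in the density statement that Lemma \ref{lem1} already provides, and transferring it to a measure-theoretic rigidity statement is routine. Alternatively, one can argue without Fourier theory by observing that for every $f \in C(X)$ the function $t \mapsto \int_X f(x + t)\, d\mu(x)$ is continuous on $\T^n$ and agrees with $\int_X f\, d\mu$ on the dense subset $\tau(H)$; thus $\mu$ is invariant under the full group $\T^n$ of translations, which by uniqueness of Haar measure forces $\mu = \eta$.
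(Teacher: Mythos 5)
Your argument is correct and is essentially the paper's: the paper also deduces from Lemma \ref{lem1} (density of the $H$-orbits, i.e.\ of $\tau(H)$) that any $H$-invariant probability measure on $X$ is invariant under all of $\T^n$, and then invokes uniqueness of Haar measure — which is exactly your closing ``alternative'' argument, while your Fourier computation is just an equivalent way of carrying out the same density-to-invariance step.
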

 \begin{proof}
 It follows from Lemma \ref{lem1} that any  $H$-invariant Borel probability measure on $X$ is also $X$-invariant. The uniqueness of the Haar normalised measure on $X$ implies the statement of the lemma. 
 \end{proof}
\bigskip

Let $f \in L^2(X)$, then by the ergodicity of $X_B$ (property (\ref{one})) it follows that for any F\o lner sequence $(F_k)_{k \geq 1}$ of finite sets in $\Lambda$ we have
\[
\frac{1}{|F_k|} \sum_{h \in F_k} \langle f, T_h f \rangle \to \left| \langle f,1\rangle\right|^2, \mbox{  as  } k \to \infty.
\]
%It follows from unique ergodicity of $\Lambda$-system $(A_d,\tau')$, where $\tau'(g) x = g \cdot x$ for all $g \in \Lambda$, that the following ergodic averages have a limit:
%\[
%\frac{1}{|F_k|} \sum_{h \in F_k} \exp{(2 \pi i h \cdot x)} = F(x).
%\]
%Then the asymptotic invariance of $(F_k)$ implies that for any $g \in \Lambda$:
%\[
%\frac{1}{|F_k|} \sum_{h \in F_k} \exp{(2 \pi i (g + h) \cdot x)} \to  F(x).
%\]
%On other hand, we have
%\[
%\frac{1}{|F_k|} \sum_{h \in F_k} \exp{(2 \pi i (g + h) \cdot x)}
%\]
%\[ = \exp{(2 \pi i g \cdot x)}\frac{1}{|F_k|} \sum_{h \in F_k} \exp{(2 \pi i  h \cdot x)}\to  \exp{(2 \pi i g \cdot x)}F(x).
%\]
%The latter implies that $F(x)$ satisfies
%\[
%F(x) = \exp{(2 \pi i g \cdot x)}F(x), \mbox{ for all } g \in \Lambda.
%\]
%This implies that 
%\[
%F(x) = \left[ \begin{array}{cc}
%												1, & x = o_{A_d}\\
%												0, & x \neq o_{A_d}.
%												\end{array}
%												\right.
%\]
On the other hand, it follows from Bochner's spectral theorem, Lebesgue's dominated convergence theorem and and Lemma \ref{character_lem} that
\[
\frac{1}{|F_k|} \sum_{h \in F_k} \langle f, T_{h} f \rangle \to \mu_f(\{o_{A_d}\}),
\]
which implies that 
\begin{equation}\label{eq-zero}
\left| \langle f, 1 \rangle \right|^2 = \mu_f(\{o_{A_d}\}).
\end{equation}
Let $x_0 \in A_d$ be a non-zero rational point with the least common denominator equal to $q$. Then the stabiliser of $x_0$ in $\Lambda$ is $H_{x_0} = q \Lambda$. Using the ergodicity of $H_{x_0}$ action on $X_B$ (property (\ref{one})), we obtain
\[
\frac{1}{|F_k|} \sum_{h \in F_k} \langle f, T_{qh} f \rangle \to \left| \langle f,1\rangle\right|^2, \mbox{  as  } k \to \infty.
\]
On the other hand, we have
 \[
\frac{1}{|F_k|} \sum_{h \in F_k} \exp{(2 \pi i \langle h, qx \rangle)} \to \left[ \begin{array}{cc}
												1, & qx = o_{A_d}\\
												0, & qx \neq o_{A_d}.
												\end{array}
												\right.
\]
Therefore, by Lebesgue's dominated convergence theorem we obtain
\begin{equation}\label{eq_1}
\left| \langle f, 1 \rangle \right|^2 = \sum_{qx = o_{A_d}} \mu_f(\{x\}).
\end{equation}
If we know in addition that $f \geq 0$, then by property (\ref{three}), the spectral measure 
$\mu_f$ is non-negative. Therefore, by use of equations (\ref{eq-zero}) and (\ref{eq_1}) we get that for all non-zero points $x \in A_d$ with $qx = o_{A_d}$ we have
\[
\mu_f(\{x\}) = 0.
\]
In particular, we have that $\mu_f(\{x_0\}) = 0$.
This finishes the proof of Proposition \ref{equid2}, if we choose $f = 1_{\tilde{B}}$.
\medskip

%\section{Number-theoretic applications}\label{sec5}
%\medskip
%
%Let us explicitly state a few corollaries of the Main Theorem. 
%\medskip

\end{document}